\newtheorem{proposition}{Proposition}[section]
\newtheorem{remark}{Remark}[section]
\newcommand\dd{\mathrm{d}}
\newcommand\pp{\partial}
\newcommand\Vvec{\mathbf{V}}
\newcommand\uvec{\bm{u}}
\newcommand\X{\mathbf{X}}
\newcommand\x{\bm{x}}
\newcommand\y{\bm{y}}
\newcommand\qvec{\bm{q}}
\newcommand\Qvec{\bm{Q}}
\begin{document}

\title{A deterministic--particle--based scheme for micro-macro viscoelastic flows}

\author{Xuelian Bao\footnote{baoxuelian@scut.edu.cn}}
\address{School of Mathematics, South China University of Technology, Guangzhou, Guangdong 510641, People’s Republic of China}

\author{Chun Liu\footnote{cliu124@iit.edu}}
\address{Department of Applied Mathematics, Illinois Institute of Technology, Chicago, IL 60616, United States}

\author{Yiwei Wang\footnote{yiweiw@ucr.edu, corresponding author}}
\address{Department of Mathematics, University of California, Riverside, Riverside, CA 92521, United States}

\begin{abstract}
In this article, we introduce a new method for discretizing micro-macro models of dilute polymeric fluids by integrating a finite element discretization for the macroscopic fluid dynamic equation with a deterministic variational particle scheme for the microscopic Fokker-Planck equation. To address challenges arising from micro-macro coupling, we employ a discrete energetic variational approach to derive a coarse-grained micro-macro model with a particle approximation first and then develop a particle-FEM discretization for the coarse-grained model. The accuracy of the proposed method is evaluated for a Hookean dumbbell model in a Couette flow by comparing the computed velocity field with existing analytical solutions. We also use our method to study nonlinear FENE dumbbell models in different scenarios, such as extension flow, pure shear flow, and lid-driven cavity flow. Numerical examples demonstrate that the proposed deterministic particle approach can accurately capture the various key rheological phenomena in the original FENE model, including hysteresis and $\delta$-function-like spike behavior in extension flows, velocity overshoot phenomenon in pure shear flow, symmetries breaking, vortex center shifting and vortices weakening in the lid-driven cavity flow, with a small number of particles. 

\end{abstract}

\maketitle

\section{Introduction}

Complex fluids comprise a large class of
soft materials, such as polymeric solutions, liquid crystals, ionic solutions, and fiber suspensions. These are fluids with complicated rheological phenomena, arising from different ``elastic'' effects, such as the elasticity of deformable particles, the interaction between charged ions, and bulk elasticity endowed by polymer molecules \cite{liu2009introduction}. Modeling and simulations of complex fluids have been % different levels of models to describe the behavior of such fluids
interesting problems for a couple of decades \cite{bird1987, larson1998, Bris2012, li2007mathematical}.

Mathematical models of complex fluids are typically categorized as pure macroscopic models \cite{Keunings1997, Lielens1998, Sizaire1999} and micro-macro models \cite{bird1992transport, Bris2012}.  The pure macroscopic models employ an empirical constitutive equation for the stress tensor ${\bm \tau}$ to supplement the conservation laws of mass and momentum \cite{Keunings1997, Lielens1998, Sizaire1999}.
Examples include the Oldroyd-B model \cite{oldroyd1950formulation} and the FENE-P model \cite{peterlin1966hydrodynamics}. 
This approach is advantageous due to its low computational cost, but the closed form of the constitutive equation may fail to capture the intricate flow behaviors of complex fluids, including hysteresis effects in polymeric fluids.
Micro-macro models, on the other hand, couple the macroscopic conservation laws with the microscopic kinetic theory, which describes the origin of the macroscopic stress tensor \cite{Bris2012, bird1992transport}. 
A typical example of a micro-macro model is the dumbbell model of a dilute polymeric fluid \cite{Bris2012, li2007mathematical}.
In this model, a polymer chain is represented by an elastic dumbbell consisting of two beads connected by a single spring. The molecular configuration is characterized by an end-to-end vector of the dumbbell, represented by $\qvec \in \mathbb{R}^d$.
The microscopic dynamic is described by a Fokker-Planck equation for the number density distribution function $f(\x, \  \qvec, \ t)$ with a drift term depending on the macroscopic velocity field $\uvec(\x, t)$. Here, $\x \in \Omega \subset \mathbb{R}^3$ is the macroscopic position, and $\Omega$ is assumed to be a bounded domain with smooth boundary.    
The macroscopic motion of the fluid is described by a Navier--Stokes equation with elastic stress ${\bm \tau}(\x, t)$ induced by the microscopic configuration of polymer chains. 
The corresponding micro-macro model is formulated as follows:
\begin{equation} \label{CPsystemwithf_1}
\begin{cases}
&\rho (\uvec_t + \uvec \cdot \nabla \uvec) + \nabla p= \eta_s \Delta \uvec+\nabla \cdot {\bm \tau}, \quad  {\bm \tau} =  \lambda_p \mathbb{E}_{f} (\nabla_{\bm q} \Psi \otimes \qvec )   = \lambda_p \int_{\mathbb{R}^d} f \nabla_{\qvec} \Psi\otimes \qvec \dd \qvec,   \\
& \nabla\cdot\uvec=0,    \\
& f_t+ \uvec \cdot \nabla f + \nabla_{\qvec}\cdot( (\nabla\uvec) \qvec f)= \frac{2}{\zeta} \nabla_{\qvec}\cdot(f\nabla_{\qvec} \Psi )+ \frac{2k_B T}{\zeta} \Delta_{\qvec} f \ ,   \\
\end{cases}
\end{equation}
subject to a suitable boundary condition on $\uvec$ and $f$. Throughout this paper, $\nabla$ with no indices denotes the derivation with respect to the macroscopic variable $\x$ and $\nabla_{\bm q}$ denotes the derivative with respect to the microscopic variable ${\bm q}$.
Here, $\rho > 0$ is the constant density of the fluid, $\lambda_p > 0$ is a constant that represents the polymer density, $k_B$ is the Boltzmann constant, $T$ is the absolute temperature, $\eta_s > 0$ is the solvent viscosity, $\zeta > 0$ is a constant related to the polymer relaxation time, and $\Psi(\qvec)$ is the spring potential. Typical choices of the elastic potential $\Psi(\qvec)$ include
\begin{equation}\label{Def_Psi}
\Psi(\qvec) = \frac{1}{2} H |\qvec|^2, \quad \text{and} \quad \Psi(\qvec) = \begin{cases} & -\frac{H Q_0^2}{2} \ln \bigg(1-\bigg(\frac{|\qvec|}{Q_0}\bigg)^2\bigg), \quad  |\qvec| < Q_0,  \\
       &   + \infty, \quad \quad \quad  |\qvec| \geq Q_0, \\
 \end{cases}
\end{equation}
known as Hookean and FENE (Finite Extensible Nonlinear Elastic) potentials. Here, $H > 0$ is the elastic constant, and $Q_0$ is the maximum dumbbell extension in FENE models. The interactions among polymer chains are neglected due to the dilute assumption. Alternatively, the microscopic dynamics can be described by a stochastic differential equation (SDE), or a Langevin equation, given by \cite{Bris2012}
\begin{equation}\label{SDE}
\begin{aligned}
\dd \qvec(\x, t)  &= \left( - \uvec \cdot \nabla \qvec(\x, t) + (\nabla \uvec) \qvec(\x, t)   - 2 \zeta^{-1} \nabla_{q} \Psi(\qvec(\x, t))  \right) \dd t + \sqrt{4 k_B T \zeta^{-1}} \dd {\bf W}_t,
\end{aligned}
\end{equation}
where $\dd {\bf W}_t$ is the standard multidimensional white noise.
One of the important properties of the model (\ref{CPsystemwithf_1}) is that the solution satisfies the following energy-dissipation law (see section 2 for details):
\begin{equation}\label{dissipationlaw0}
    \begin{aligned}
    &\frac{\dd}{\dd t} \int_{\Omega}\bigg( \frac{1}{2} \rho |\uvec|^2 + \lambda_p \int_{\mathbb{R}^d}k_B T f \ln f + \Psi f \dd \qvec \bigg) \dd \x = - \int_{\Omega} \bigg(\eta_s |\nabla \uvec|^2  + \int_{\mathbb{R}^d}\frac{\lambda_p \zeta}{2} f | \nabla (k_B T \ln f + \Psi) |^2 \dd \qvec \bigg)\dd \x \ .
    \end{aligned}
\end{equation}
The energy-dissipation law plays a crucial role in understanding the underlying physics of the micro-macro model \cite{Giga2017}, as well as in establishing the well-posedness of the model  \cite{lin-liu-zhang}.
% indicating the thermodynamically consistency of the model.

Although micro-macro type models give an elegant description of the origin of the macroscopic stress tensor for various complex fluids \cite{ Bris2012, bird1992transport, lin-liu-zhang}, directly simulating micro-macro models has been a long-standing challenge.
Various computational techniques have been developed to solve the micro-macro model \eqref{CPsystemwithf_1} \cite{weinan2009general, Halin1998, Hulsen1997, Laso1993, Lozinski2011, ottinger1996}. The two main approaches are Langevin-equation-based stochastic simulation methods and direct simulation methods based on the microscopic Fokker-Planck equation \cite{Lozinski2011}. One of the earliest Langevin-equation-based numerical methods is the CONNFFESSIT (Calculation of Non-Newtonian Flow: Finite Elements and Stochastic Simulation Technique) algorithm, which couples a finite element discretization to the macroscopic flow with a numerical solver for the microscopic SDE \eqref{SDE} \cite{Laso1993, ottinger1996}. Along this direction, other stochastic approaches, such as the Lagrangian particle method (LPM) \cite{Halin1998} and the Brownian configuration field (BCF) method \cite{Hulsen1997}, were proposed to reduce the variance and computational cost of the original CONNFFESSIT algorithm. Several extensions and corresponding numerical experiments have been extensively investigated in recent years \cite{Bris2012, Boyaval2010, chauviere2002, Griebel2014, Jourdain2004, Koppol2007, XuSPH2014}. Although stochastic approaches have been the dominant simulation methods for micro-macro models, they suffer from several shortcomings, including high computational costs and stochastic fluctuations. An alternative approach is to simulate the Fokker-Planck equation in the configuration space directly. Examples include Galerkin spectral element technique \cite{Chauviere2004, KnezevicFP, Lozinski2003jcp, shen2012approximation, suen2002} and the lattice Boltzmann technique \cite{Ammar2010, Bergamasco2013}. However, such methods are well suited only for polymeric models having low-dimensional configuration spaces, and the computational cost of Fokker--Planck--based methods increases rapidly for simulations in strong flows (with highly localized distribution function) or involving high-dimensional configuration spaces \cite{Lozinski2011}.

In the context of solving Fokker--Planck type equations, deterministic particle methods have recently gained considerable attention  \cite{russo1990deterministic, degond1990deterministic, lacombe1999presentation, Carrilloblob, wangEVI}. These methods handle the diffusion terms in the equation by using various kernel regularizations \cite{Carrilloblob, degond1990deterministic, lacombe1999presentation, russo1990deterministic}. Unlike Langevin dynamics-based stochastic particle methods, deterministic particle methods often require less number of particles and do not suffer from stochastic fluctuations. The success of deterministic particle methods in solving Fokker--Planck equations has motivated us to develop an efficient numerical method for micro-macro models by incorporating a deterministic particle method. However, the micro-macro coupling in these models presents new challenges that need to be addressed.

The goal of this paper is to develop an efficient numerical method for micro-macro models by utilizing a deterministic particle method for solving the microscopic Fokker-Planck equation.
To overcome these difficulties arising from the micro-macro coupling, we construct the numerical discretiztaion based on the variational formulation of the model. More precisely, we apply the deterministic particle approximation at the energy-dissipation law (\ref{dissipationlaw0}) level and employ an energetic variational approach \cite{wang2020jcp} to derive a coarse-grained micro-macro model with a particle approximation first. A particle-FEM discretization is developed for the coarse-grained model. 
Various numerical experiments have been performed to validate the new scheme via several benchmark problems.
Despite its simplicity, the deterministic particle method is robust, accurate, and capable of catching various rheological behaviors of polymeric fluids. The numerical results obtained by our scheme are in agreement with those from the former work  \cite{Laso1993, Anne1999, XuSPH2014, Hyon2008, Hyon2014}. Moreover, deterministic particle discretization is often more efficient than most stochastic particle methods, where a large ensemble of realizations of the stochastic process and a small step size are needed. 

The rest of this article is organized as follows. %A formal derivation of the micro-macro model of dilute polymeric fluids by employing the energetic variational approach is given in Section 2. 
In section 2, %we introduce the energetic variational approach by deriving the the micro-macro model (\ref{CPsystemwithf_1}) from the energy-dissipation law (\ref{dissipationlaw0}).
we present the basic energy-dissipation law associated with the micro-macro model (\ref{CPsystemwithf_1}) and show that the continuous model can be derived from the energy-dissipation law using an energetic variational approach.
In section 3, we employ the energetic variational approach to derive a coarse-grained particle-based micro-macro model by applying particle approximation at the energy-dissipation law level, and we construct a deterministic particle-FEM scheme for the coarse-grained model.
Various numerical experiments are presented in section 4. 
Finally, the concluding remarks are given in section 5.

\section{Preliminary}

In this section, we briefly introduce the energetic variational approach (EnVarA).
Motivated by non-equilibrium thermodynamics, particularly the celebrated works of Rayleigh \cite{Rayleigh1871} and Onsager \cite{onsager1931reciprocal, onsager1931reciprocal2},
the EnVarA has been successfully applied to build various mathematical models in physics, chemical engineering, and biology \cite{Giga2017, wang2020field}. It also serves as a valuable guideline for developing structure-preserving numerical schemes for these systems \cite{wang2020jcp}.

% In the framework of EnVarA, 
The key idea of EnVarA is to describe an isothermal and mechanically isolated system by its energy and the rate of energy dissipation over time, along with kinematic (transport) assumptions on the employed variables. %The PDE model of the system can be derived 
The dynamics of the system, i.e., the differential equation model, can be derived by combining the Least Action Principle (LAP) and the Maximum Dissipation Principle (MDP) \cite{Giga2017}.
% a non-equilibrium thermodynamic system can be modeled by its energy-dissipation law,  . 
More precisely, according to the first and second laws of thermodynamics \cite{Giga2017, ericksen1992introduction}, an isothermal and closed system possesses an energy-dissipation law
\begin{equation}
\frac{\dd}{\dd t} E^{\text{total}}(t) = - \triangle(t) \leq 0 \ .
\end{equation}
Here, $E^{\text{total}}$ is the total energy, which is the sum of the Helmholtz free energy $\mathcal{F}$ and the kinetic energy $\mathcal{K}$; $\triangle (t) \geq 0$ stands for the rate of energy dissipation, which equals to the rate of entropy production in this case. %, $\triangle(t)$ is a quadratic function in terms of the rate of the state variable \cite{de2013non}.
Once these quantities are specified, for the energy part, one can employ the LAP, taking variation of the action functional $\mathcal{A}(\x) = \int_0^T \left( \mathcal{K} - \mathcal{F} \right) \dd t$ with respect to $\x$ (the trajectory in Lagrangian coordinates) \cite{Giga2017, arnol2013mathematical},  to derive the conservative force, i.e.,
$\delta \mathcal{A} =  \int_{0}^T \int_{\Omega} ({\rm force}_{\text{iner}} - {\rm force}_{\text{conv}})\cdot \delta \x  ~\dd \x \dd t.$ For the dissipation part, one can apply the MDP, taking the variation of the Onsager dissipation functional $\mathcal{D}$ with respect to the ``rate'' $\x_t$, to derive the dissipative force, i.e., $\delta \mathcal{D}  = \int_{\Omega} {\rm force}_{\text{diss}} \cdot \delta \x_t~ \dd \x$, where the dissipation functional $\mathcal{D} = \frac{1}{2} \triangle$ in the linear response regime \cite{onsager1931reciprocal}. Consequently, the force balance condition %(Newton's second law, in which the inertial force plays a role of $ma$) 
results in
\begin{equation}\label{FB}
\frac{\delta \mathcal{A}}{\delta \x} = \frac{\delta \mathcal{D}}{\delta \x_t},
\end{equation}
which is the dynamics of the system. In the case that $\mathcal{K} = 0$, $\frac{\delta \mathcal{A}}{\delta \x} = - \frac{\delta \mathcal{F}}{\delta \x}$, then the dynamics can be written as $\frac{\delta \mathcal{D}}{\delta \x_t} = - \frac{\delta \mathcal{F}}{\delta \x},$ which is a generalized gradient flow.
It is important to notice that the force balance equation (\ref{FB}) uses the strong form of the variational result, as the test functions may be in different spaces in the original variational weak form \cite{Giga2017}. From a modeling perspective, one advantage of utilizing an energy-dissipation law to model a complex system, rather than equations, is that it allows for the systematic inclusion of multiscale and multiphysics coupling and completion.

\subsection{EnVarA for a simple fluid}

To illustrate the idea of EnVarA, we first consider a simple incompressible fluid, which is usually described by an incompressible Navier--Stokes equation
\begin{equation}\label{NS_Incomp}
\begin{cases}
& \rho (\uvec_t + \uvec \cdot \nabla \uvec) + \nabla p = \eta \Delta \uvec, \\
& \nabla \cdot \uvec = 0, \\
& \rho_t + \nabla \cdot (\rho \uvec) = 0 \ , \\
\end{cases}
\end{equation}
with a boundary condition $\uvec = 0$ and initial conditions. Here, $\uvec$ is the fluid velocity, $\uvec \cdot \nabla \uvec = \sum_{j=1}^3 u_j \pp_j u_i$, $\rho$ is the fluid density, $\eta > 0$ is the viscosity, and $p$ is the hydrodynamic pressure. %The overall system (\ref{NS_Incomp}) can be derived from the energy-dissipation law
% An alternative description of system is the energy-dissipation law
Multiplying the first equation of (\ref{NS_Incomp}) by $\uvec$ and using the integration by parts, one can show $\uvec$ satisfies the following energy-dissipation law
\begin{equation}\label{ED_NS}
  \frac{\dd}{\dd t} \int_{\Omega} \frac{1}{2} \rho |\uvec|^2  \dd \x = - \int_{\Omega} \eta |\nabla \uvec|^2 \dd \x\ ,
\end{equation}
where $|\nabla \uvec|$ is the Frobenius norm for matrix $\nabla \uvec$.
Here, the kinetic energy $\mathcal{K}$, the free energy $\mathcal{F}$ and the Onsager dissipation function $\mathcal{D}$ are given by
\begin{equation}
\mathcal{K}  = \int_{\Omega} \frac{1}{2} \rho |\uvec|^2 \dd \x, \quad \mathcal{F} = 0, \quad \mathcal{D} = \int_{\Omega} \frac{1}{2}  \eta |\nabla \uvec|^2 \dd \x\ .
\end{equation}
%Here $\int_{\Omega} \rho |\uvec|^2 \dd \x$ is the kinematic energy, the Helmholtz free energy $\mathcal{F} = 0$, $\mathcal{D} = \int_{\Omega} \frac{1}{2}  \eta |\nabla \uvec|^2 \dd \x$ is the Onsager dissipation functional.  

The goal of EnVarA is to derive the equation of $\uvec$, i.e., the momentum equation in (\ref{NS_Incomp}), from the energy-dissipation law (\ref{ED_NS}), using the LAP and MDP.
To apply the LAP and MDP, one needs to introduce a Lagrangian description of the system by defining the flow map associated with the velocity field $\uvec(\x, t)$ through an ordinary differential equation:
\begin{equation}\label{Def_flow_map}
 \frac{\dd}{\dd t} \x(\X, t) = \uvec(\x(\X, t), t), \quad \x(\X, 0) = \X,
\end{equation} 
where $\X \in \Omega^0$ is the Lagrangian coordinates, and $\Omega^0$ is the reference domain. For fixed $\X$, $\x(\X, t)$ describes the trajectory of a particle initially located at $\X$. For fixed $t$, $\x(\X, t)$ is a diffeomorphism between a reference domain $\Omega^0$ to the current domain $\Omega^t$. In the current case, we have $\Omega^t = \Omega$. It is convenient to define the deformation tensor associated with the flow map $\x(\X, t)$ in both Eulerian and Lagrangian coordinates by
\begin{equation}
{\sf\tilde{F}}(\x(\X,t),t) = 
    {\sf F}(\X, t) = \nabla_{\X} \x(\X, t)\ ,  % = \bigg( } \bigg) % _{1\leq i, j \leq d} \ .
\end{equation}
as the deformation tensor ${\sf F}(\X, t)$ carries all the transport information of employed variables \cite{linfh2012}.
Here, the notation $(\nabla_{\X} \x(\X, t))_{ij} =  \frac{\partial x_i}{\partial X_j}$ is used. Applying the chain rule, one can show that in Eulerian coordinates, the deformation tensor ${\sf\tilde{F}}(\x, t)$ satisfies the transport equation  \cite{lin-liu-zhang} 
\begin{equation}\label{transport_F}
  \tilde{\sf F}_t + \uvec\cdot\nabla \tilde{\sf F} = (\nabla \uvec) \tilde{\sf F}\ ,
\end{equation}
where $(\nabla \uvec)_{ij} = u_{i, j} = \frac{\pp u_i}{\pp x_j}$.
For the incompressible fluid, we have
\begin{equation}
 \rho(\x(\X, t), t) = \rho_0(\X), \quad \det {\sf F}(\X, t) = 1,
\end{equation}
where $\rho_0(\X)$ is the initial density. 
Hence, the action $\mathcal{A}[\x]$ can be written as
\begin{equation}
 \mathcal{A} [\x(\X, t)] = \int_{0}^T \mathcal{K} - \mathcal{F} \dd t =  \int_{0}^T \int_{\Omega^0} \rho_0(\X) | \x_t (\X, t)|^2 \dd \X \dd t 
\end{equation}
in Lagrangian coordinates, 
which is a functional of the flow map $\x(\X, t)$.
To compute the variable $\mathcal{A}[\x]$ with respect to $\x(\X, t)$, we consider a perturbation $\x^{\epsilon}(\X, t) = \x(\X, t) + \epsilon \y(\X, t),$ where $\y(\X, t) = \tilde{\y}(\x(\X, t), t)$ is the perturbation satisfying $\tilde{\y} \cdot {\bf n} = 0$ with ${\bf n}$ being the outer normal of $\Omega$. By direct computation, we have
 \begin{equation}
 \begin{aligned}
   \frac{\dd}{\dd \epsilon} \Big|_{\epsilon = 0} \mathcal{A}[\x^{\epsilon}] & = \int_{0}^T \int_{\Omega^0} \rho_0(\X)  x_t (\X, t) \cdot \y_t(\X, t) ~ \dd \X \dd t  = \int_{0}^T \int_{\Omega^0}  - \rho_0(\X)  x_{tt} (\X, t) \cdot \y(\X, t) ~ \dd \X \dd t,
   \end{aligned}
 \end{equation}
where the second identity follows the integration by parts. Pushing forward to Eulerian coordinates, we have
 \begin{equation}\label{LAP_Step1}
     \frac{\dd}{\dd \epsilon} \Big|_{\epsilon = 0} \mathcal{A}[\x^{\epsilon}] = 
     \int_{0}^T \int_{\Omega} - \rho (\uvec_t + \uvec \cdot \nabla \uvec)  \cdot \tilde{\y} ~~ \dd \x \dd t \ ,
 \end{equation}
which indicates
%which satisfies $\tilde{\y} \cdot {\bf n} - 0$ and . Consequently,
\begin{equation}\label{LAP_Step2}
  \frac{\delta \mathcal{A}}{\delta \x} = - \rho (\uvec_t + \uvec \cdot \nabla \uvec)
\end{equation}
in Eulerian coordinates. For the dissipation part, we apply the MDP by considering a perturbation $\uvec^{\epsilon} (\x, t) = \uvec(\x, t) + \epsilon {\bm v}(\x, t)$. A direct computation shows that
\begin{equation}
  \frac{\dd}{\dd \epsilon} \Big|_{\epsilon = 0} \left(\int_{\Omega} \eta |\nabla \uvec^{\epsilon}|^2 - p (\nabla \cdot \uvec^{\epsilon})    \dd \x \right) = \int_{\Omega}  ( - \eta \Delta u + \nabla p) \cdot {\bm v} \dd \x\ ,
\end{equation}
which indicates 
\begin{equation}\label{MDP_NS}
  \frac{\delta \mathcal{D}}{\delta \x_t}=\frac{\delta \mathcal{D}}{\delta \uvec} = - \eta \Delta u + \nabla p\ .
\end{equation}
Here, $p$ is the Lagrangian multiplier for the incompressible condition $\nabla \cdot \uvec = 0$. Recall the force balance condition (\ref{FB}), we obtain the momentum equation in the incompressible Navier-Stokes equation (\ref{NS_Incomp}) by combining (\ref{LAP_Step2}) and (\ref{MDP_NS}). 

\subsection{EnVarA for the micro-macro model}

As mentioned in the introduction, the well-used micro-macro model (\ref{CPsystemwithf_1}) employs an energy-dissipation law (\ref{dissipationlaw0}). More precisely, by a direct calculation, we can show the following result:
\begin{proposition}
  Suppose that $(\uvec(\x, t), f(\x, \qvec, t))$ is a smooth solution of (\ref{CPsystemwithf_1}) and satisfies the boundary conditions $$\uvec(\x, t) = 0,~~ f(\x, \qvec, t) = 0,  \quad \x \in \partial \Omega, \quad \lim_{\|\qvec\| \rightarrow \infty} f(\x, \qvec, t) = 0, $$ then  $(\uvec(\x, t), f(\x, \qvec, t))$ satisfies the following energy-dissipation law
  \begin{equation}\label{dissipationlaw1}
    \begin{aligned}
    &\frac{\dd}{\dd t} \int_{\Omega}\bigg( \frac{1}{2} \rho |\uvec|^2 + \lambda_p \int_{\mathbb{R}^d}k_B T f \ln f + \Psi f \dd \qvec \bigg) \dd \x = - \int_{\Omega} \bigg(\eta_s |\nabla \uvec|^2 + \int_{\mathbb{R}^d}\frac{\lambda_p \zeta}{2} f  |  {\bf V} -  (\nabla \uvec) \qvec|^2  \dd \qvec \bigg) \dd \x \ ,
    %| \nabla (k_B T \ln f + \psi) |^2 \dd \qvec \bigg)\dd \x \ ,
    \end{aligned}
    \end{equation}
    where
    \begin{equation}\label{Def_V_micro}
      {\bf V}(\x, \qvec, t) = (\nabla \uvec \qvec) - \frac{2}{\zeta} \nabla_{\qvec} \Psi  - \frac{2k_B T}{\zeta} \nabla_{\qvec} (\ln f)
     \end{equation}
     is the microscopic velocity associated with the microscopic Fokker-Planck equation in (\ref{CPsystemwithf_1}).
\end{proposition}

\begin{proof}
We multiply the first equation of \eqref{CPsystemwithf_1} by $\uvec$ and integrate with respect to $\x$ over domain $\Omega$. Using integration by parts and the fact that $\uvec$ is divergence-free, we have

\begin{equation}\label{ED_macro}
\frac{\dd}{\dd t} \int_{\Omega} \frac{1}{2} \rho |\uvec|^2 \dd \x = - \int_{\Omega}  \eta_s |\nabla \uvec |^2 \dd \x - \int_{\Omega} {\bm \tau} : \nabla \uvec \dd \x\ ,
\end{equation}
where $A:B = \sum_{i, j} A_{ij}B_{ij}$. 
Next, we multiply the third equation of (\ref{CPsystemwithf_1}) by $k_B T \ln f + \Psi$ and integrate with respect to both $\qvec$ over $\mathbb{R}^d$ and $\x$ over $\Omega$. Again, using integration by parts and the fact that $\uvec$ is divergence-free, we can obtain
\begin{equation}\label{ED_micro}
  \frac{\dd}{\dd t} \int_{\Omega} \int_{\mathbb{R}^d} k_B T f \ln f + f \Psi ~ \dd \qvec \dd \x = - \int_{\Omega} \int_{\mathbb{R}^d} \frac{2}{\zeta}  f | \nabla_{\qvec} (k_B T \ln f + \Psi) |^2  + f (\nabla_{\qvec} \Psi \otimes \qvec) : \nabla \uvec ~ \dd \qvec \dd \x . 
\end{equation}
Indeed, due to $\nabla \cdot \uvec = 0$ and $\nabla \Psi(\qvec) = 0$ (as $\Psi$ doesn't depend on $\x$), we have
\begin{equation}
  \begin{aligned}
  &  \int_{\Omega} \int_{\mathbb{R}^d} (\uvec \cdot \nabla f) (k_B T \ln f + \Psi) \dd \qvec \dd \x \\
  & \quad = \int_{\Omega} \int_{\mathbb{R}^d} - (\nabla \cdot \uvec) (k_B T f \ln f + f \Psi) - k_B T \uvec \cdot \nabla f   \dd \qvec \dd \x = \int_{\Omega} \int_{\mathbb{R}^d}  k_B T f (\nabla \cdot \uvec) \dd \qvec \dd \x  = 0 \\
  \end{aligned}
\end{equation}
and
\begin{equation}
  \begin{aligned}
  & \int_{\Omega} \int_{\mathbb{R}^d} \nabla_{\qvec} \cdot ( (\nabla\uvec) \qvec f)  (K_B T \ln f) \dd \qvec \dd \x  =   \int_{\Omega} \int_{\mathbb{R}^d} - k_B T ( (\nabla \uvec) \qvec) \cdot \nabla_{q} f \dd \qvec \dd \x \\
  & =  \int_{\Omega} \int_{\mathbb{R}^d}  k_B T \nabla_{\qvec} \cdot ( (\nabla \uvec) \qvec)  f \dd \qvec \dd \x = \int_{\Omega} \int_{\mathbb{R}^d}  k_B T (\nabla \cdot \uvec)  f \dd \qvec \dd \x  = 0 .\\
  \end{aligned}
\end{equation}

Recall the definition of the microscopic velocity \eqref{Def_V_micro}, the first term on the right-hand side of \eqref{ED_micro} can be written as
\begin{equation}
\frac{2}{\zeta}  f | \nabla_{\qvec} (k_B T \ln f + \Psi) |^2  = \frac{\zeta}{2}  f | {\bm V} - (\nabla \uvec) \qvec |^2 .
\end{equation}
Multiplying (\ref{ED_micro}) by $\lambda_p$ and adding it to (\ref{ED_macro}), we obtain the energy-dissipation law \eqref{dissipationlaw1}.
\end{proof}

In what follows, we show that the micro-macro model (\ref{CPsystemwithf_1}) can be derived from the energy-dissipation law (\ref{dissipationlaw1}) along with the kinematics of the number density distribution function $f(\bm{x}, \bm{q}, t)$:
\begin{equation}\label{fequation}
    \pp_t f + \nabla \cdot (f \uvec) + \nabla_{\bm q} \cdot (f {\bm V}) = 0\ ,
\end{equation}
using the EnVarA.
Here, $\uvec(\x, t)$ is the macroscopic velocity, and ${\bm V}(\x, \qvec, t)$ is microscopic velocity in the configuration space. The goal is to derive the equation of $\uvec(\x, t)$ and ${\bm V}(\x, \qvec, t)$ from (\ref{dissipationlaw1}) by combing LAP and MDP.

We first look at the microscopic dynamics at the configuration space for any given $\x$, which is described by the microscopic energy-dissipation law 
\begin{equation}\label{MicroS_ED}
  \frac{\dd}{\dd t}   \mathcal{F}^{\rm micro}  = - 2 \mathcal{D}^{\rm micro}, \quad 
  \mathcal{F}^{\rm micro} = \int_{\mathbb{R}^d} k_B T f \ln f + \Psi f \dd \qvec, \quad  \mathcal{D}^{\rm micro} = \frac{1}{2}\int_{\mathbb{R}^d} \frac{\zeta}{2} f | {\bm V} - (\nabla \uvec)\qvec|^2 \dd \qvec.
\end{equation}
% is introduced to describe the evolution of the molecular configuration.
In the case that $\uvec = 0$, the microscopic energy-dissipation law (\ref{MicroS_ED}) at each $\x$ corresponds to the energy-dissipation law of a linear Fokker--Planck equation \cite{epshteyn2022nonlinear}. Similar to (\ref{Def_flow_map}), we can define a flow map in the configuration space, $\bm{q}(\x, \Qvec, t): \mathbb{R}^d \rightarrow \mathbb{R}^d$, associated with the microscopic velocity $\bm{V}(\x, \qvec, t)$, for any given $\x$, as follows
\begin{equation}
  \frac{\dd}{\dd t} \qvec(\x, \Qvec, t) = {\bm V}(\x, \qvec(\x, \Qvec, t), t), \quad \qvec(\x, \Qvec, 0) = \Qvec, \quad \forall \x. 
\end{equation}
We  refer to  $\qvec(\x, \Qvec, t)$ as the microscopic flow map, to distinguish it from the macroscopic flow map $\x(\X, t)$.
%The micro-macro coupling is introduced through
The microscopic dissipation can be interpreted as the relative friction between the microscopic velocity ${\bm V}$ and the macroscopic induced velocity $\widetilde{\mathbf{V}} (\x, \qvec, t)  = (\nabla \uvec) \qvec$.
The macroscopic induced velocity is derived by assuming the effects of macroscopic flow on the microscopic configuration include the linear transport of the center of mass and the stretching of the polymer chain,
i.e., $\qvec(\x, \Qvec, t) = \tilde{\sf F}(\x, t)\Qvec$ \cite{lin-liu-zhang}, which indicates that
$$
\begin{aligned}
\widetilde{\mathbf{V}} (\x, \qvec, t) =\frac{\dd}{\dd t}\bigg(\tilde{\sf F}\Qvec\bigg)=\bigg(\frac{\dd}{\dd t}\tilde{\sf F}\bigg)\Qvec  = (\tilde{\sf F}_t + \uvec\cdot\nabla \tilde{\sf F}) \Qvec =  (\nabla \uvec) \tilde{\sf F} \Qvec = (\nabla \uvec) \qvec\ ,
%= \sum_{j} \frac{ \partial (x_i)_t}{\partial X_j} Q_j = \sum_{j}  \frac{ \partial u_i}{\partial x_k} \bigg( \frac{ \partial x_k}{\partial X_j}  Q_j \bigg) = \nabla \uvec ({\sf F}  \Qvec) = (\nabla \uvec) \qvec.\\
\end{aligned}
$$
where the transport equation of the deformation tensor (\ref{transport_F}) is used.
The relation $\qvec(\x, \Qvec, t) = \tilde{\sf F}(\x, t)\Qvec$ is the same as the Cauchy--Born rule in solid mechanics that relates the macroscopic deformation of crystals to changes in lattice vectors \cite{cauchyborn2008}.  %In Eulerian coordinate, $\qvec(\X, \Qvec, t) = {\sf F}(\X, t) \Qvec$ is equivalent to

To derive the microscopic dynamics, one can apply the EnVarA to the microscopic energy-dissipation law (\ref{MicroS_ED}). % the action functional is defined by
Since the microscopic dynamics is a gradient flow with $\mathcal{K}^{\rm mirco} = 0$, we only need to compute the variation of the microscopic free energy
\begin{equation}\label{Micro_Action}
\begin{aligned}
    \mathcal{F}^{\rm micro} [\qvec(\x, \Qvec, t)] 
    & =  \int_{\mathbb{R}^d} k_B T f_0 \ln \left(  \frac{f_0(\x, \Qvec)}{\det G(\x, \Qvec, t)} \right)  + \Psi(\qvec) f_0 (\x, \Qvec) \dd \Qvec ,
\end{aligned}
\end{equation}
with respect to the microscopic flow map $\qvec(\x, \Qvec, t)$. Here, $f_0(\x, \Qvec)$ is the initial density  and $G(\x, \Qvec, t) = \nabla_{\Qvec} \qvec(\x, \Qvec, t)$ is the microscopic deformation tensor. Similar to Eqs. \eqref{LAP_Step1} and \eqref{LAP_Step2}, we take the variation of Eq. \eqref{Micro_Action} with respect to $\qvec$ in the microscopic Lagrangian coordinates, and push forward to microscopic Eulerian coordinates, we have
\begin{equation}
%\frac{\delta \mathcal{A}^{\rm micro} [\qvec] }{\delta \qvec} = - 
\frac{ \delta \mathcal{F}^{\rm micro} [\qvec]}{\delta \qvec} = f \nabla_{\qvec}( k_B T \ln f 
 + \Psi)\ .
\end{equation}
For the dissipation part, by taking variation of $$\mathcal{D}^{\rm micro} = \frac{1}{2}\int_{\mathbb{R}^d} \frac{\zeta}{2} f | {\bm V} - (\nabla \uvec)\qvec|^2 \dd \qvec,$$ we have $\frac{\delta \mathcal{D}^{\rm micro}}{\delta {\bm V}} = \frac{\zeta}{2}({\bf V} - \nabla \uvec \qvec)$.
By the force balance condition (\ref{FB}), 
we obtain 
\begin{equation}\label{EnVarA_V}
\frac{\zeta}{2} f ({\bf V} - \nabla \uvec \qvec) = - f \nabla_{\qvec}( k_B T \ln f + \Psi).
\end{equation}
Combining Eq. \eqref{EnVarA_V} with Eq. \eqref{fequation}, we get the equation on the microscopic scale:
\begin{equation}
f_t+ \nabla\cdot(f \uvec)+\nabla_{\qvec}\cdot(\nabla\uvec \qvec f)= \frac{2}{\zeta} \nabla_{\qvec}\cdot(f\nabla_{\qvec} \Psi )+ \frac{2k_B T}{\zeta} \Delta_{\qvec} f.
\end{equation}

\begin{remark}
    The above derivation is based on the Lagrangian description of microscopic dynamics in the configuration space. Alternatively, the microscopic dynamics can be interpreted as a Wasserstein type gradient flow \cite{jordan1998variational} corresponding to the microscopic free energy in Eulerian coordinates, i.e.,
    \begin{equation}\label{FK_f}
              f_t+ \nabla\cdot(f \uvec)+\nabla_{\qvec}\cdot(\nabla\uvec \qvec f) = \frac{2}{\zeta} \nabla_{\qvec} \cdot (f \nabla_{\qvec} \frac{\delta \mathcal{F}^{\rm micro}}{\delta f}),
    \end{equation}
    where $\mathcal{F}^{\rm micro}$ is the microscopic free energy defined in (\ref{MicroS_ED}), and 
    \begin{equation}
    \frac{\delta \mathcal{F}^{\rm micro}}{\delta f}  = k_B T (\ln f + 1) + \Psi
    \end{equation}
    is the variation of $\mathcal{F}^{\rm micro}$ with respect to the number distribution function $f(\x, \qvec, t)$.
\end{remark}

The variation procedure on the macroscopic scale is similar to that in the simple fluid. To account for the ``separation of scale'', we should treat ${\bm q}$ and ${\bm V}$ as being independent from $\x(\X, t)$ when deriving the macroscopic force balance. The micro-macro coupling is taken into account by the dissipation term $|{\bm V} - (\nabla \uvec) \qvec|^2$. Hence, the action functional is defined by
\begin{equation}
  \begin{aligned}
\mathcal{A}(x) & = \int_{0}^T \int_{\Omega} \left[ \frac{1}{2} \rho |\uvec|^2 - \lambda_p \int_{\mathbb{R}^3} (k_B T f (\ln f - 1) + \Psi(\qvec) f) \dd \qvec \right] \dd \x \dd t, \\
% & \int_{0}^T \int_{\Omega_0} \left[ \frac{1}{2} \rho_0 |\x_t|^2 - \lambda \int_{\mathbb{R}^3} (k_B T \psi_0 (\ln \frac{\psi_0}{\det F} - 1) + U \psi) \dd \qvec \right] \dd \X \dd t
  \end{aligned}
\end{equation}
% By taking variation of $\mathcal{A}(\x)$ with respect to $\x$, we get
and the LAP (taking variation of $\mathcal{A}(\x)$ with respect to $\x$) gives rise to
\begin{equation}\label{LAP_macro}
\frac{\delta\mathcal{A}}{\delta \x}= -\rho \x_{tt} = - \rho(\uvec_t + \uvec \cdot \nabla \uvec).
\end{equation}
Meanwhile, for the dissipation part, since $\mathcal{D} = \frac{1}{2} \int_{\Omega} \eta_s |\nabla \uvec|^2 + \mathcal{D}^{\rm micro} ~ \dd \x$, 
the MDP results in
\begin{equation}
\begin{aligned}
  \frac{\delta \mathcal{D}}{\delta \x_t} & =  - \eta_s \Delta \uvec + \frac{\lambda_p \zeta}{2} \nabla \cdot \int f ({\bm V} - \nabla \uvec \qvec) \otimes \qvec \dd \qvec\ ,  \\
  % & = - \eta \Delta \uvec - \nabla \cdot ( \int \psi \nabla_{\qvec} U \otimes \qvec \dd \qvec - n {\bf I}) \\
      \end{aligned}
\end{equation}
where, $\otimes$ denotes a tensor product and $\uvec\otimes\mathbf{v}$ is a matrix $(u_i v_j)$ for two vectors $\uvec$ and $\mathbf{v}$.
Hence, the force balance results in the equation on the macroscopic scale:
\begin{equation}
\rho(\uvec_t + \uvec \cdot \nabla \uvec) + \nabla p= \eta_s \Delta \uvec +\nabla \cdot {\bm \tau},
\end{equation}
where $p$ is the Lagrangian multiplier of the incompressible condition,
and ${\bm \tau}$ is the induced elastic stress tensor, given by
\begin{equation}\label{stress_IK}
\begin{aligned}
 {\bm \tau} & = \frac{\lambda_p \zeta}{2} \int - f ({\bm V} - \nabla \uvec \qvec) \otimes \qvec \dd \qvec = \lambda_p \int f(\nabla_{\qvec}( k_B T \ln f + \Psi)) \otimes \qvec \dd \qvec \\
 & = \lambda_p \int k_B T \nabla_{\qvec} f \otimes \qvec + f \nabla_{\qvec} \Psi \otimes \qvec \dd \qvec =   \lambda_p \int - k_B T f {\bf I} + f \nabla_{\qvec} \Psi \otimes \qvec \dd \qvec \\
 & =  \lambda_p \left( \int_{\mathbb{R}^d} f \nabla_{\qvec} \Psi \otimes \qvec \dd \qvec - k_B T n {\bf I}  \right).
 \end{aligned}
\end{equation}
The expression (\ref{stress_IK}) is known as the Kramers form of the stress tensor \cite{ottinger1996}.
Since $- k_B T n {\bf I}$ is a diagonal matrix, it can contribute to the pressure term, so that it is convenient to drop it as in (\ref{CPsystemwithf_1}) \cite{ottinger1996}.

\begin{remark}
The induced stress ${\bm \tau}$ can be written as
\begin{equation}\label{IK_f}
{\bm \tau} = \lambda_p \int f(\nabla_{\qvec}( k_B T \ln f + \Psi)) \otimes \qvec \dd \qvec  = \lambda_p \mathbb{E}_f ( (k_B T \nabla_{\qvec} \ln f + \nabla_{\qvec} \Psi ) \otimes  \qvec ) \ ,
\end{equation}
which can be interpreted as the Irving--Kirkwood formula \cite{irving1950statistical}. Here, $k_B T \nabla_{\qvec} \ln f$ can be viewed as an entropic force or Brownian force \cite{bird1985molecular}.
In the case of incompressible flow, this term will contribute to the pressure, as shown in (\ref{stress_IK}).
%{\color{red}[[In the case of incompressible, this term will contribute to the pressure.]] delete??}
\end{remark}

% The form of the induced elastic stress tensor ${\bm \tau}$ is exactly the Kramers' expression of the polymeric stress, which reflects the microscopic contribution tp the macroscopic flow. {\color{blue} }

\begin{remark}
In the above derivation, the induced stress tensor is derived from the dissipation part in the energy-dissipation law. 
Alternatively,  one should consider  the microscopic configuration to be transported with the flow in the macro-scale,
which indicates that $\qvec = {\sf F} \Qvec$ and $\Vvec = \tilde{\Vvec} = (\nabla \uvec )\qvec$ when performing EnVarA on the macro-scale.
Hence, the action functional $\mathcal{A}[\x]$ can be written as 
\begin{equation}
\mathcal{A}[\x]= \int_{0}^T \int_{\Omega^0}\bigg[ \frac{1}{2} \rho |\x_t|^2 -\lambda_p \int_{\mathbb{R}^d} k_B T f_0 \ln f_0  + \Psi({\sf F} \Qvec) f_0  \dd \Qvec \bigg] \dd \X \dd t
\end{equation}
in Lagrangian coordinates. Here, $f_0(\X, \Qvec)$ is the initial number distribution function. Consider a perturbation $\x^{\epsilon}(\X, t) = \x(\X, t) + \epsilon \y(\X, t),$
where $\y(\X, t) = \tilde{\y}(\x(\X, t), t)$ is the perturbation satisfying $\tilde{\y} \cdot {\bf n} = 0$ with ${\bf n}$ being the outer normal of $\Omega$. Then
\begin{equation*}
  \begin{aligned}
    \frac{\dd}{\dd \epsilon} \mathcal{A} (\x^{\epsilon}) \Big|_{\epsilon = 0} = \int_0^T \int_{\Omega^0} & \Bigl[ - \rho \x_{tt} \cdot \y - \lambda_p \int_{\mathbb{R}^d}  f_0 \nabla_{\qvec} \Psi \otimes \Qvec : \nabla_{\X} \y \dd \Qvec \Bigr] \dd \X \dd t. \\
  \end{aligned}
\end{equation*}
Pushing forward to Eulerian coordinates, we have
\begin{equation*}
  \begin{aligned}
   & \frac{\dd}{\dd \epsilon} \mathcal{A} (\x^{\epsilon}) \Big|_{\epsilon = 0} = \int_0^T \int_{\Omega}  \Bigl[ - \rho (\uvec_t+\uvec \cdot \nabla \uvec) \cdot  \tilde{\y}  - \lambda_p \int_{\mathbb{R}^d} f \nabla_{\qvec} \Psi \otimes \qvec : \nabla_{\x}  \tilde{\y} ) \dd \qvec \Bigr] \dd \x \dd t\\
    & =  \int_0^T \int_{\Omega} \left( - \rho (\uvec_t+\uvec \cdot \nabla \uvec) + \lambda_p \nabla \cdot (\int_{\mathbb{R}^d} f \nabla_{\qvec} \Psi \otimes \qvec \dd \qvec) \right) \cdot \tilde{\y} \dd \x \dd t\ . \\
  \end{aligned}
\end{equation*}
Hence, the LAP leads to %, i.e., taking variation of $\mathcal{A}(\x)$ with respect to $\x$, we get
\begin{equation}\label{LAP_r}
\frac{\delta\mathcal{A}}{\delta\x}= -\rho (\uvec_t+\uvec \cdot \nabla \uvec) +\lambda_p \nabla\cdot\bigg( \int_{\mathbb{R}^d} f \nabla_{\qvec} \Psi\otimes \qvec \dd \qvec \bigg)
\end{equation}
in Eulerian coordinates. 
For the dissipation part,
due to the ``separation of scale'',
the second term in the dissipation in \eqref{dissipationlaw1} vanishes as ${\bm V} = \tilde{ \bm V}$ for the macroscopic dynamics.
Hence, same to (\ref{MDP_NS}), we have
$\frac{\delta\mathcal{D}}{\delta\x_t}= -\eta_s \Delta \uvec + \nabla p$
by employing MDP. Here, $p$ is the Lagrangian multiplier for the incompressible condition $\nabla \cdot \uvec = 0$.
\end{remark}

% For the sake of simplicity, in this paper, we consider Dirichlet boundary conditions for the velocity.  Other boundary conditions can be similarly treated. 

\subsection{Nondimensionalization of the micro-macro model}

It is convenient to nondimensionalize the micro-macro model by introducing the following  nondimensionalized parameters:
$$
{\rm Re} = \frac{\rho \tilde{U} \tilde{L}}{\eta}, \quad {\rm Wi} = \frac{\lambda \tilde{U}  }{\tilde{L}}, \quad \tilde{\eta}_s = \frac{\eta_s}{\eta}, \quad \epsilon_p = \frac{\eta_p}{\eta}, \quad \lambda = \frac{\zeta}{4H},
$$
where $\tilde{L}= \sqrt{\frac{k_B T}{H}}$ is the  characteristic length scale, $\tilde{U}$ is the characteristic velocity, $\eta_p = \lambda_p k_B T \lambda$ is related to the polymer viscosity, $\eta$ is the total fluid viscosity and $\eta = \eta_s + \eta_p$. The final nondimensionalized system reads as follows, 
\begin{equation}\label{finalmodelnondim_contin}
\left\{
\begin{aligned}
& \mathrm{Re}(\uvec_t+\uvec \cdot \nabla \uvec)+\nabla p= \tilde{\eta}_s \Delta \uvec+\nabla \cdot \bm \tau, \quad {\bm \tau}=\frac{\epsilon_p}{{\rm Wi}} \int f \nabla_{\qvec} \Psi \otimes \qvec \dd \qvec,\\
& \nabla\cdot\uvec=0,\\
& f_t+ \nabla\cdot(\uvec f)+\nabla_{\qvec}\cdot(\nabla\uvec \qvec f)= \frac{1}{2\rm Wi} \nabla_{\qvec}\cdot(f\nabla_{\qvec} \Psi )+  \frac{1}{2\rm Wi} \Delta_{\qvec} f , 
\end{aligned}
\right.
\end{equation}
where $$\Psi(\qvec) = \frac{1}{2}  |\qvec|^2, \qquad  \nabla_{\qvec} \Psi = \qvec,$$ in Hookean model, and $$
\Psi(\qvec) = -\frac{b}{2} \ln (1-|\qvec|^2/b), \qquad  \nabla_{\qvec} \Psi = \frac{\qvec}{1-|\qvec|^2/b}, \quad |\qvec| \leq b$$ with $b = HQ_0^2/k_B T$ in FENE model.

\section{The deterministic particle-FEM method}

In this section, we construct the numerical scheme for the micro-macro model \eqref{CPsystemwithf_1},
which combines a finite element discretization of the macroscopic fluid dynamic equation \cite{bao2021, becker2008,chenrui2015} with a deterministic particle method for the microscopic Fokker-Planck equation \cite{wangEVI}. To overcome the difficulty arising from micro-macro coupling, we first employ a discrete energetic variational approach to derive a particle--based micro--macro model. 
%by applying the particle approximation to the the continuous energy dissipation law (\ref{dissipationlaw}).
The discrete energetic variational approach follows the idea of ``Approximation-then-Variation'', which first applies particle approximation to the continuous energy dissipation law.
As an advantage, the derived coarse-grained system preserves the variational structure at the particle level.

% The discrete energetic variational approach follows the idea of ``Approximation-then-Variation'', which first applies particle approximation to the continuous energy dissipation law and derives a coarse-grained model by variational procedures.

\subsection{A coarse-grained deterministic particle-based model}

For simplicity, we consider the spatially homogeneous case, and assume the number density function satisfies 
\begin{equation}\label{spatial_hom}
\int_{\mathbb{R}^d} f(\x, \qvec, t) \dd \qvec = 1, \quad \forall \x \in \Omega \ .
\end{equation}

\begin{remark}
The spatial homogeneous assumption is valid if the initial condition $ f_0(\x, \qvec)$ satisfies Eq. (\ref{spatial_hom}). Indeed, recall  Fokker-Planck equation
\begin{equation}\label{fokkerplanck}
f_t+ \nabla\cdot(f \uvec)+\nabla_{\qvec}\cdot(\nabla\uvec \qvec f) = \frac{2}{\zeta} \nabla_{\qvec}\cdot(f\nabla_{\qvec} \Psi )+ \frac{2k_B T}{\zeta} \Delta_{\qvec} f.
\end{equation}
Let $n(\x, t) = \int_{\mathbb{R}^d} f(\x, \qvec, t) \dd \qvec$ be the number density of polymer chains. 
Integrating Eq. (\ref{fokkerplanck}) with respect to $\qvec$ and using the incompressible condition, Eq. \eqref{fokkerplanck} gives 
$$\frac{\pp}{\pp t} n(\x, t) + \uvec \cdot \nabla_{\x} n(\x, t)=0,
$$
which indicates that $n(\x(\X, t), t) = n_0(\X)$ in Lagrangian coordinates. Hence, $n_0(\X) = 1$ leads to $n(\x, t) = 1$.
\end{remark}

%which means that the number density of polymer chains is spatially homogeneous. 
% Thus, for fixed $\x$, $f(\x, \qvec, t)$ can be approximated by
The idea of the deterministic particle approximation is to approximate $f(\x, \qvec, t)$ by 
\begin{equation}\label{approxf}
f_N (\x, \qvec, t) =  \frac{1}{N} \sum_{i=1}^{N}  \delta(\qvec - \qvec_i(\x, t)), \quad \forall \x \in \Omega\ ,
\end{equation}
where $N$ is the number of particles at $\x$ and time $t$, $\{\qvec_i (\x, t) \}_{i=1}^N$ represents the set of particles at position $\x$ and time $t$. 
In general, the deterministic particle approximation should be written as  $f_N (\x, \qvec, t) = \sum_{i=1}^{N}  \omega_i(\x, t) \delta(\qvec - \qvec_i(\x, t))$, where $\omega_i(\x, t)$ denotes the weight of $i$-th particle, satisfying $\sum_{i=1}^{N}\omega_i(\x, t) = 1$. In the current study, we only seek for a deterministic particle solution that satisfies $\dot{\omega}_i = 0$ \cite{russo1990deterministic}. Moreover, since $\{ \qvec_i(\x, 0) \}_{i=1}^N$ is sampled from the initial distribution $f_0(\x, \qvec)$, we have $\omega_i(\x, t) = \omega_i(\x, 0) = \frac{1}{N}$.

\begin{remark}
 $\{\qvec_i(\x, t)\}_{i=1}^N$ can be viewed as representative particles that represent information of the number density distribution $f(\x, \qvec, t)$ at $\x$.
 Since only $\{\qvec_i (\x, t) \}_{i=1}^N$  need to be computed at each time-step, the computational cost can be largely reduced, compared to computing $f(\x, \qvec, t)$ directly. % It is more like an Eulerian 
\end{remark}

One can derive a deterministic particle scheme by directly applying the particle approximation (\ref{approxf}) to the micro-macro model (\ref{CPsystemwithf_1}). For the microscopic Fokker-Planck equation (\ref{fokkerplanck}), recalling the definition of the microscopic velocity ${\bm V}(\x, \qvec, t)$ in (\ref{Def_V_micro}), we have
\begin{equation}\label{eq_using_mu}
\dot{\qvec}_i \approx {\bm V}(\x, \qvec_i, t)  = (\nabla \uvec) \qvec_i - \frac{2}{\zeta} \nabla_{\qvec} \left( \frac{\delta \mathcal{F}^{\rm micro}}{\delta f} \right) \Big|_{\qvec = \qvec_i} = (\nabla \uvec) \qvec_i - \frac{2}{\zeta} \nabla_{\qvec} \Psi (\qvec_i) - \frac{2k_B T}{\zeta}  \nabla_{\qvec} \ln f (\qvec_i) \ .
\end{equation}
The difficulty is how to estimate $\nabla_{\qvec} \ln f(\qvec_i)$ when $f$ is replaced by the empirical measure $f_N$. One approach is to replace the empirical measure $f_N (\qvec) = \frac{1}{N} \sum_{j=1}^N \delta (\qvec - \qvec_j)$ by 
\begin{equation}\label{kernel_smoothing}
f_N^h (\qvec) = f_N * K_h = \frac{1}{N}\sum_{j=1}^N K_h(\qvec, \qvec_j)\ ,
\end{equation}
where $K_{h}(\qvec, \qvec_j)$ is a smooth kernel function and $h$ is the kernel bandwidth \cite{degond1990deterministic}.  A typical choice of $K_{h} (\qvec, \qvec_j)$ is the Gaussian kernel, given by
$$
K_{h}(\qvec, \qvec_j)=\frac{1}{(\sqrt{2\pi}h)^d}\exp\left(-\frac{|\qvec - \qvec_j|^2}{2h^2}\right)\ ,
$$
where $d$ is the dimension of the space. By a direct computation, we have
\begin{equation}
\nabla_{\qvec} \ln f_N^h (\qvec) = \frac{\nabla_{\qvec} f_N^h (\qvec)}{ f_N^h (\qvec)} = \frac{\sum_{j=1}^N \nabla_{\qvec} K_h (\qvec, \qvec_j)}{ \sum_{j=1}^N K_h (\qvec, \qvec_j) }\ ,
\end{equation}
which leads to the deterministic particle equation
\begin{equation}\label{Particle_1}
\dot{\qvec}_i = (\nabla \uvec) \qvec_i - \frac{2}{\zeta} \nabla_{\qvec} \Psi (\qvec_i) -  \frac{2k_B T}{\zeta}  \frac{ \sum_{j=1}^N  \nabla_{\qvec_i} K_h(\qvec_i, \qvec_j) }{\sum_{j=1}^{N} K_h(\qvec_i, \qvec_j)}. 
\end{equation}
On the macroscopic scale, the deterministic particle approximation leads to an approximated stress tensor
\begin{equation}\label{directapproxitau}
 {\bm \tau} \approx  \lambda_p \int_{\mathbb{R}^d} f_N \nabla_{\qvec} \Psi \otimes \qvec \dd \qvec = \frac{\lambda_p}{N} \sum_{i=1}^N \nabla_{\qvec} \Psi (\qvec_i) \otimes \qvec_i
\end{equation}
by replacing $f$ by $f_N$.
However, %as mentioned in \cite{wangEVI}, 
the above approximation fails to maintain the variational structure, i.e., admits an energy-dissipation law as a counterpart of (\ref{dissipationlaw1}).

To maintain the variational structure in the particle level, we adopt the idea of ``approximation-then-variation'', which first substitutes the approximation \eqref{approxf} into the energy-dissipation law \eqref{dissipationlaw1}. %, and performing variation to obtain the equation of $\qvec_i(\x, t)$. 
%we can obtain a discrete energy-dissipation law in terms of $\qvec_i(\x, t)$ and the macroscopic flow. 
To derive the equation of $\qvec_i$, we first substitute the approximation \eqref{approxf} into the microscopic energy-dissipation law \eqref{MicroS_ED}. Since the term $\ln f_N(\x, \qvec, t)$ can not be defined in a proper way, we introduce the kernel smoothing (\ref{kernel_smoothing}) to handle the $\ln f$ term. More precisely, we first define a regularized microscopic free energy \cite{Carrilloblob}:
\begin{equation}\label{F_micro_q}
\begin{aligned}
\mathcal{F}_h^{\rm micro}[f] & =  \int_{\mathbb{R}^d} k_B T f \ln (f * K_h) + \Psi f \dd \qvec\ ,
\end{aligned}
\end{equation}
where $K_h$ is a kernel function introduced before. Replacing $f$ with $f_N$ in (\ref{F_micro_q}), we end up with a microscopic free energy in terms of particles $\{ \qvec_i \}_{i=1}^N$, given by
\begin{equation}
\mathcal{F}^{\rm micro}_{N, h}(\qvec_1, \ldots, \qvec_N) =  \frac{1}{N} \sum_{i=1}^N \left[ k_B T \ln \left(\frac{1}{N} \sum_{j=1}^N K_h(\qvec_i, \qvec_j )\right)+\Psi(\qvec_i)\right].
\end{equation}
The microscopic dissipation potential $\mathcal{D}^{\rm micro}$ can be approximated by
\begin{equation}\label{D_micro_q}
\begin{aligned}
&\mathcal{D}_{N, h}^{\rm micro}  = \frac{1}{2} \frac{\zeta}{2}\frac{1}{N} \sum_{i=1}^N |\dot{\qvec}_i - \nabla \uvec \qvec_i|^2\ ,
\end{aligned}
\end{equation}
where the approximation $V(\x, \qvec_i, t) \approx \dot{\qvec_i}$ is used, and  $\dot{\qvec_i} = \pp_t \qvec_i + \uvec \cdot \nabla \qvec_i$ is the material derivative of $\qvec_i$. One can view $(\nabla \uvec) \qvec_i$ as particle velocity that is induced by the macroscopic flow as in the continuous model. %Again, we using the kernel smoothing (\ref{kernel_smoothing}) to handle $\ln f$ when $f$ is replaced by the empirical measure $f_N$.

Within the microscopic free energy (\ref{F_micro_q}) and dissipation (\ref{D_micro_q}) in the particle level, the dynamics of $\qvec_i(\x, t)$ can be derived by performing the EnVarA in terms of $\qvec_i$ and $\dot{\qvec_i}$, i.e.,
$$\frac{\delta \mathcal{D}_{N, h}^{\rm micro}}{\delta \dot{\qvec}_i}=-\frac{\delta \mathcal{F}_{N, h}^{\rm micro}}{\delta \qvec_i}\ .$$
By a direct computation, we have
\begin{equation}\label{Eq_qi_1}
\frac{\zeta}{2}\frac{1}{N} (\dot{\qvec}_i - \nabla \uvec \qvec_i) = - {\bm \mu}_i\ ,
\end{equation}
where 
\begin{equation}
{\bm \mu}_i = \frac{1}{N} \bigg[ k_B T \bigg(\frac{\sum_{j=1}^N  \nabla_{\qvec_i} K_h(\qvec_i, \qvec_j)}{\sum_{j = 1}^N K_h(\qvec_i, \qvec_j) }  + \sum_{k=1}^N \frac{\nabla_{\qvec_i} K_h(\qvec_k, \qvec_i )}{\sum_{j=1}^N K_h(\qvec_k, \qvec_j)}\bigg) + \nabla_{\qvec_i} \Psi(\qvec_i) \bigg].
\end{equation}
The equation (\ref{Eq_qi_1}) can be rewritten as
\begin{equation}\label{Eq_qi}
\begin{aligned}
  \pp_t \qvec_i + \uvec \cdot \nabla \qvec_i & = (\nabla \uvec)\qvec_i - \frac{2}{\zeta} \bigg[ k_B T \bigg(\frac{\sum_{j=1}^N  \nabla_{\qvec_i} K_h(\qvec_i, \qvec_j)}{\sum_{j = 1}^N K_h(\qvec_i, \qvec_j) }  + \sum_{k=1}^N \frac{\nabla_{\qvec_i} K_h(\qvec_k, \qvec_i )}{\sum_{j=1}^N K_h(\qvec_k, \qvec_j)}\bigg) + \nabla_{\qvec_i} \Psi(\qvec_i) \bigg].
\end{aligned}
\end{equation}
It can be noticed that Eq. (\ref{Eq_qi}) is a gradient flow with respect to $ \qvec_i $ in the absence of the flow, i.e. $\uvec = 0$ at $\forall \x$. Comparing (\ref{Eq_qi}) with (\ref{Particle_1}), we notice an additional term, crucial for preserving the variational structure at the particle level, which can only be derived through the  "Approximation-then-Variation" approach.
%In general, there are two approaches to developing deterministic particles method for the microscopic Fokker-Planck equation (\ref{fokkerplanck}), known as ``variation-then-discrete'' \cite{degond1990deterministic} and  ``discrete-then-variation'' approaches. In the first approach, 

\begin{remark}
   The same deterministic particle scheme can be obtained by replacing $\mathcal{F}^{\rm micro}$ in (\ref{FK_f}) by $\mathcal{F}^{\rm micro}_h$ \cite{reich2021fokker}.
   Through direct calculation, we have
       \begin{equation}\label{dF_h_df}
       \begin{aligned}
         \frac{\delta \mathcal{F}_h^{\rm micro}}{\delta f}  =  \left(  k_B T \ln (K_h * f) + k_B T  \left( \frac{K_h}{K_h * f} \right) * f + \Psi  \right) .\\
        \end{aligned}
       \end{equation}
      Consequently,  the microscopic velocity equation becomes
      \begin{equation}\label{V_with_F_h}
       f {\bm V}(\x, \qvec, t) = f (\nabla \uvec) \qvec - \frac{2}{\zeta} (  f \nabla_{\qvec}  \frac{\delta \mathcal{F}_h^{\rm micro}}{\delta f}  ).
      \end{equation}
      The kernel regularized microscopic free energy $\mathcal{F}_h^{\rm micro}$ permits a direct substitution of $f$ with $f_N$ in Eq. (\ref{V_with_F_h}). In other word, (\ref{V_with_F_h}) has the property that "particles remain particles" \cite{Carrilloblob}.
      By direct calculation, we have
      \begin{equation}
          V(\x, \qvec_i, t) = (\nabla \uvec) \qvec_i  - \frac{2}{\zeta} \nabla_{\qvec} \left( k_B T \ln \left(  \frac{1}{N} \sum_{j=1}^N K_h (\qvec, \qvec_j) \right) + k_B T   \sum_{k=1}^N \frac{  K_h (\qvec, \qvec_k)}{  \sum_{j=1}^N  K (\qvec_k, \qvec_j) } + \Psi(\qvec) \right) \Bigg|_{\qvec = \qvec_i}\ ,
      \end{equation}
      which is equivalent to (\ref{Eq_qi}).
\end{remark}

The variational procedure for the macroscopic flow is almost the same as that in the continuous case, as shown in section 2. By applying the LAP, we obtained (\ref{LAP_macro}), while the MDP leads to
\begin{equation}
  \frac{\delta \mathcal{D}_N}{\delta \x_t}  =  - \eta_s \Delta \uvec + \frac{\lambda_p \zeta}{2}   \nabla \cdot  
   \left( \frac{1}{N} \sum_{i=1}^N ( \dot{\qvec_i} - \nabla \uvec \qvec_i) \otimes \qvec_i  \right) ,
\end{equation}
where $\mathcal{D}_{N} = \frac{1}{2} \int_{\Omega} \eta_s |\nabla \uvec|^2 + \lambda_p \mathcal{D}^{\rm micro}_N  \dd \x$ is the macroscopic dissipation. 
% Recall (\ref{Eq_qi_1}), the stress 
% Irving–Kirkwood formula 
Hence, we can define the stress as
\begin{equation}
{\bm \tau} =  - \frac{\lambda_p \zeta}{2} \frac{1}{N} \sum_{i=1}^N ( \dot{\qvec_i} - \nabla \uvec \qvec_i) \otimes \qvec_i  = \lambda_p \sum_{i=1}^N   {\bm \mu}_i \otimes \qvec_i. 
\end{equation}
The final micro-macro system with particle approximation is given by
\begin{equation}\label{finalmodel}
\left\{
\begin{aligned}
&\rho(\uvec_t+\uvec \cdot \nabla \uvec) + \nabla p= \eta_s \Delta \uvec+\nabla \cdot {\bm \tau}, \quad {\bm \tau}(\x, t) = \lambda_p  \sum_{i=1}^N  {\bm \mu}_i \otimes \qvec_i,\\
& \nabla\cdot\uvec=0,\\
% & \qvec_i'-(\nabla \uvec)\qvec_i=-\xi \left( \frac{\sum_{j=1}^N  \nabla_{\qvec_i} K_h(\qvec_i, \qvec_j)}{\sum_{j = 1}^N K_h(\qvec_i, \qvec_j) } + \sum_{k=1}^N \frac{\nabla_{\qvec_i} K_h(\qvec_k, \qvec_i)}{\sum_{j=1}^N K_h(\qvec_k, \qvec_j)} + \nabla_{\qvec_i} \Psi(\qvec_i) \right).
\end{aligned}
\right.
\end{equation}
where $\qvec_i(\x, t)$ satisfies Eq. \eqref{Eq_qi}.

\begin{remark}
One can notice that the stress tensor obtained through the energetic variational approach differs from that obtained by directly replacing $f$ with $f_N$ in the stress in Eq. (\ref{CPsystemwithf_1}). The main reason is that the Brownian force in the particle approximation $\nabla_{\qvec_i} \ln f_N^{h}$ can no longer be written in a diagonal matrix form as in (\ref{stress_IK}). Therefore, it does not vanish as it does in the continuous model.
\end{remark}

% As in the continuous case, 
% We consider a dimensionless version of the micro-macro system Eq. 
One can view the macroscopic flow equation (\ref{finalmodel}) along with the microscopic evolution equation (\ref{Eq_qi}) as a coarse-grained model for the original micro-macro model (\ref{CPsystemwithf_1}). As an advantage of the energetic variational approach, one can maintain the variational structure in the particle level,  which is crucial in establishing the well-posedness of the coarse-grained model and proving the convergence for $N \rightarrow \infty$. More precisely, we have the following result:
\begin{proposition}
The coarse-grained model \eqref{Eq_qi_1}-\eqref{finalmodel} satisfies the energy-dissipation law
\begin{equation}\label{ED_particle}
  \frac{\dd}{\dd t} \int_{\Omega} \frac{1}{2} \rho |\uvec|^2 +\frac{ \lambda_p }{N} \sum_{i=1}^N \left[ k_B T \ln \left(\frac{1}{N} \sum_{j=1}^N K_h(\qvec_i - \qvec_j )\right)+\Psi(\qvec_i)\right]  \dd \x = - \int_{\Omega} \eta_s |\nabla \uvec|^2 +  \frac{\lambda_p \zeta}{2}\frac{1}{N} \sum_{i=1}^N |\dot{\qvec}_i - \nabla \uvec \qvec_i|^2 \dd \x. 
\end{equation}

\end{proposition}

\begin{proof}
The proof is similar to that in the continuous case. % We denote
%\begin{equation}
%F(\qvec_1, \ldots \qvec_N) =  \frac{\lambda_p }{N} \sum_{i=1}^N \left[ k_B T \ln \left(\frac{1}{N} \sum_{j=1}^N K_h(\qvec_i, \qvec_j )\right)+\Psi(\qvec_i)\right], \quad {\bm \mu}_i = \frac{\delta \mathcal{F}_h^{\rm micro}}{\delta q_i} =  \nabla_{\qvec_i} F(\qvec_1, \ldots \qvec_N)
%\end{equation}
Let $F(\qvec_1, \ldots \qvec_N) =  \mathcal{F}^{\rm micro}_{N, h}(\qvec_1, \ldots, \qvec_N)$.
Multiplying (\ref{Eq_qi_1}) by ${\bm \mu}_i$, summing with respect to $i$, and integrating over the domain, we have
\begin{equation}\label{ED_micro_1}
\begin{aligned}
 \frac{\dd}{\dd t} \int_{\Omega} F(\qvec_1, \ldots \qvec_N) \dd \x & = \int_{\Omega} \sum_{i=1}^N ((\nabla \uvec) \qvec_i) \cdot {\bm \mu}_i -  \sum_{i=1}^N \frac{2 N}{\zeta}|{\bm \mu}_i|^2 \dd \x \\
 & = \int_{\Omega}  ( \sum_{i=1}^N {\bm \mu}_i \otimes \qvec_i ) : (\nabla \uvec)  -  \frac{\zeta}{2}\frac{1}{N} \sum_{i=1}^N |\dot{\qvec}_i - \nabla \uvec \qvec_i|^2 \dd \x\ ,
 \end{aligned}
\end{equation}
due to the fact that 
\begin{equation}
 \sum_{i=1}^N \int_{\Omega} (\uvec \cdot \nabla \qvec_i) \cdot {\bm \mu}_i  \dd \x  = \int_{\Omega} \uvec  \cdot \nabla F(\qvec_1, \ldots \qvec_N) = - \int_{\Omega} (\nabla \cdot \uvec)  F(\qvec_1, \ldots \qvec_N)  \dd \x = 0.
\end{equation}

Similar to proposition 2.1, we multiply (\ref{finalmodel}) by $\uvec$ and integrate with respect to $\x$ over domain $\Omega$. Using
integration by parts and the fact that $\uvec$ is divergence-free, we have
\begin{equation}\label{ED_macro_p_1}
 \frac{\dd}{\dd t} \int_{\Omega} \frac{1}{2} \rho |\uvec|^2 \dd \x  = - \int_{\Omega} \eta_s |\nabla \uvec|^2 - {\bm \tau} : \nabla \uvec  \dd \x. 
\end{equation}

By multiplying (\ref{ED_micro_1}) by $\lambda_p$ and adding the result to (\ref{ED_macro_p_1}), we obtain the energy-dissipation law (\ref{ED_particle}) at the particle level.

\end{proof}

The coarse-grained model \eqref{Eq_qi}-\eqref{finalmodel} can be non-dimensionalized by using the same nondimensionalized parameters as in the continuous case. The final nondimensionalized system reads
\begin{equation}\label{finalmodelnondim}
\left\{
\begin{aligned}
& \mathrm{Re}(\uvec_t+\uvec \cdot \nabla \uvec)+\nabla p = \tilde{\eta}_s \Delta \uvec+\nabla \cdot \bm \tau, \quad {\bm \tau} =  \frac{\epsilon_p}{\rm Wi} \sum_{i=1}^N  {\bm \mu}_i \otimes \qvec_i, \\
& {\bm \mu}_i =  \frac{1}{N} \bigg( \frac{\sum_{j=1}^N  \nabla_{\qvec_i} K_h(\qvec_i, \qvec_j)}{\sum_{j = 1}^N K_h(\qvec_i, \qvec_j) }  + \sum_{k=1}^N \frac{\nabla_{\qvec_i} K_h(\qvec_k, \qvec_i )}{\sum_{j=1}^N K_h(\qvec_k, \qvec_j)}
+\nabla_{\qvec_i} \Psi(\qvec_i) \bigg), \\
& \nabla\cdot\uvec=0,
\end{aligned}
\right.
\end{equation}
with $\qvec_i(\x, t)$ satisfying
\begin{equation}\label{Eq_qinondim}
\begin{aligned}
\pp_t \qvec_i + \uvec \cdot \nabla \qvec_i -(\nabla \uvec)\qvec_i &=- \frac{1}{2 {\rm Wi}} \bigg( \frac{\sum_{j=1}^N  \nabla_{\qvec_i} K_h(\qvec_i, \qvec_j)}{\sum_{j = 1}^N K_h(\qvec_i, \qvec_j) } + \sum_{k=1}^N \frac{\nabla_{\qvec_i} K_h(\qvec_k, \qvec_i)}{\sum_{j=1}^N K_h(\qvec_k, \qvec_j)}+ \nabla_{\qvec_i} \Psi(\qvec_i) \bigg).
\end{aligned}
\end{equation}
Due to the presence of the convection term $\uvec \cdot \nabla \qvec_i(\x, t)$, $\qvec_i(\x, t)$ should be viewed as a field rather than a particle at $\x$. However, introducing a spatial discretization might significantly increase the computational cost. To overcome this difficulty, we will adopt a Lagrangian approach to deal with the convection term, which leads to an independent ensemble of particles at each $\x$ when $\uvec \cdot \nabla \qvec_i(\x, t) \neq 0$.

% Here, the nondimensionalized parameters are the same as those in the last section.

\begin{remark}
% One challenge in developing a deterministic particle method for the Fokker-Planck type equation is how to choose the bandwidth $h$. 
A key step in the above deterministic particle scheme is to replace the empirical measure $f_N$ by $f_N^h$ using kernel smoothing (\ref{kernel_smoothing}) when computing $\ln f$ in the free energy. The choice of kernel bandwidth $h$ is important for the accuracy and robustness of the numerical scheme below. 
Intuitively, if $h$ is too small, then $f_N^h$ will be oscillated. Consequently, it fails to provide a good approximation to $\ln f$ and $\nabla (\ln f)$, and the term $\frac{\sum_{j=1}^N  \nabla_{\qvec_i} K_h(\qvec_i, \qvec_j)}{\sum_{j = 1}^N K_h(\qvec_i, \qvec_j) } + \sum_{k=1}^N \frac{\nabla_{\qvec_i} K_h(\qvec_k, \qvec_i)}{\sum_{j=1}^N K_h(\qvec_k, \qvec_j)}$ will be very large. On the other hand, if $h$ is too large, $f_N^h$ will be flatten and the term $\frac{\sum_{j=1}^N  \nabla_{\qvec_i} K_h(\qvec_i, \qvec_j)}{\sum_{j = 1}^N K_h(\qvec_i, \qvec_j) } + \sum_{k=1}^N \frac{\nabla_{\qvec_i} K_h(\qvec_k, \qvec_i)}{\sum_{j=1}^N K_h(\qvec_k, \qvec_j)}$ will be almost zero, which also fails to approximate $\nabla \ln f_N^h$.
The optimal kernel bandwidth depends on the potential $\Psi(\qvec)$ and the macroscopic flow. In the current study, we choose the kernel bandwidth $h$ through multiple numerical experiments (see the numerical sections for details).

%b is problem dependent, in the current study, we fix the bandwidth $h$ by conducting multiple trials. 
% We expect $h \rightarrow 0$ when $N \rightarrow infty$ as $K_h$ is used to approximate a $\delta$ 

% One challenge in developing a deterministic particle method for the Fokker-Planck type equation is how to choose the bandwidth $h$. From the above derivation, it is clear that one need to choose $h$ such that $\ln f_N^h$ and $\ln f$. Intuitively, for the Gaussian kernel, $h$ controls the inter-particle distances. The optimal bandwidth and the choice of kernel function are problem-dependent.
%However, in practice, since the number of particles is finite, it is not clear how small h should be.
%Moreover, from an approximation perspective, in order to have a good approximation to the diffusion term, it requires $h$ is be significant larger than the spacing between the particles, i.e., a significant amount of particles are needed in the support of $K_h$ \cite{russo1990deterministic}.
\end{remark}

\subsection{Full discrete scheme}
In this subsection, we construct a full discrete scheme for the coarse-grained model (Eqs. \eqref{finalmodelnondim} and \eqref{Eq_qinondim}).
To solve the micro-macro system numerically, it is a natural idea to develop some decoupled schemes. Precisely, we propose the following scheme for the temporal discretization:
\begin{itemize}
    \item Step 1: Treat the viscoelastic stress ${\bm \tau}^n$ explicitly, and solve the equation \eqref{finalmodelnondim} to obtain updated values for the velocity and pressure. 
   \item Step 2: Use the updated velocity field $\uvec^{n+1}$ to solve the equation of $\qvec_i$ at each node, and then update values of the viscoelastic stress, denoted by ${\bm \tau}^{n+1}$.
   \end{itemize}

      %and project them into the finite element space of ${\bm \tau}$.

Eq. \eqref{finalmodelnondim} in the first step can be solved by a standard incremental pressure-correction scheme \cite{Guermond2006} reads as follows: 
   \begin{itemize}
      \item Step 1.1: 
\begin{equation}\label{Eq_u_n}
\begin{aligned}
  & {\rm Re} \bigg(\frac{\tilde{\uvec}^{n+1} - \uvec^n}{\Delta t} + \uvec^n \cdot \nabla \tilde{\uvec}^{n+1}\bigg) + \nabla p^{n} = \tilde{\eta}_s \Delta \tilde{\uvec}^{n+1} + \nabla \cdot \bm\tau^n, \\
\end{aligned}
\end{equation}
\item Step 1.2: 
 \begin{equation} 
 \left\{
 \begin{aligned}
  & \frac{\uvec^{n+1} - \tilde{\uvec}^{n+1}}{\Delta t} + \nabla (p^{n+1} - p^n) =  0, \\ 
  & \nabla \cdot \uvec^{n+1} = 0 \ .  \\
\end{aligned}
\right.
\end{equation}
\end{itemize}

 We use the finite element method developed in Refs. \cite{becker2008, chenrui2015} for spatial discretization, employing the inf-sup stable Iso-P2/P1 element \cite{brezzi91, Tabata00} for velocity and pressure, and a linear element for each stress component. %$\bm{\tau}$.
 More precisely, let $\Omega$ be the bounded computational domain, $\mathcal{T}_h$ and $\hat{\mathcal{T}}_{h}$ be two triangulations of $\Omega$, with $\mathcal{T}_h$ being the uniform refinement of $\hat{\mathcal{T}}_h$. We denote $\mathcal{T}_h$ and $\hat{\mathcal{T}}_h$ as sets of simplexes $\{\kappa_e | e=1,\dots,M\}$ and 
 $\{\hat{\kappa}_e | e=1,\dots,\hat{M}\}$, respectively. $N_h = \{\x_1,\dots,\x_{N_x}\}$ and $\hat{N}_h = 
 \{\hat{\x}_1,\dots,\hat{\x}_{\hat{N}_x}\}$ are sets of nodal points. We construct the finite-dimensional subspaces $S_h$, $\hat{S}_h\subset H^1(\Omega)$ and $S_h^0\subset H_0^1(\Omega)$ as follows:
$$
S_h = \{g \in C^0(\overline{\Omega}): g|_{\kappa} \in P_1(\kappa)\}, \quad  \hat{S}_h = \{g \in C^0(\overline{\Omega}): g|_{\hat{\kappa}} \in P_1(\hat{\kappa})\}, \quad S_h^0 = \{g \in S_h: g|_{\partial \Omega}=0 \},
$$
where $P_r(\kappa)$ is the space of polynomial functions of degree less than or equal to $r$ on the simplex $\kappa$. We let 
$V_{{\bm \tau}} = (S_h)^{d^2}$ with $d$ the dimension of space,  $V_{\uvec_h} = (S_h^0)^d$ and $M_h = \hat{S}_h \cap L_0^2(\Omega)$. One can show that $V_{\uvec_h}$ and $M_h$ satisfy the inf–sup condition \cite{mixedFEM, brezzi91}
$$
\inf_{p_h\in M_h} \sup_{\uvec_h \in V_{\uvec_h}} \frac{\int_{\Omega} p_h \nabla \cdot \uvec_h d\x}{\| p_h\| \| \uvec_h\|_1} \geq C,
$$
where $C>0$ is independent of mesh size $h$ and $\| \uvec_h\|_1 = \|\nabla \uvec_h \| + \| \uvec_h \|$.

The full discretization scheme for Step 1 can be summarized as follows.
Given % the time step size $\Delta t$, the initial conditions $\{\qvec_{\x_{\alpha}, i}^{0}\}_{i=1}^N$, $\uvec^0 \in V_{\uvec_h}$, and $p^0 \in M_h$, having computed for $\{\qvec_i^{n}\}_{i=1}^N$, 
$\uvec_h^n \in V_{\uvec_h}$, $\bm \tau_h^n \in V_{\bm \tau}$ and $p_h^n \in M_h$ for $n>0$, we compute $\uvec_h^{n+1}$ and $p_h^{n+1}$ by the following algorithm:
% The equation (\ref{Eq_u_n}) can be solved by the following two steps: 
    \begin{itemize}
      \item Step 1.1: Find $\tilde{\uvec}_h^{n+1} \in V_{\uvec_h}$, such that for any $\mathbf{v} \in V_{\uvec_h}$, 
  \begin{equation*}
  \begin{aligned}
   &{\rm Re} \bigg(\frac{1}{\Delta t} \tilde{\uvec}_h^{n+1}+\uvec_h^n \cdot \nabla \tilde{\uvec}_h^{n+1}, \mathbf{v}\bigg)+(\tilde{\eta}_s \nabla \tilde{\uvec}_h^{n+1}, \nabla\mathbf{v})  =  {\rm Re} \bigg(\frac{1}{\Delta t} \uvec_h^{n}, \mathbf{v}\bigg)-(\nabla p_h^n, \mathbf{v}) + (\nabla \cdot \bm \tau_h^n, \mathbf{v}).
  \end{aligned}
      %& \tilde{\uvec}_h^{n+1}|_{\partial\Omega} = 0,  
  \end{equation*} \\
      \item Step 1.2: Find $p_h^{n+1} \in M_h$, such that for any $\psi \in M_h$, 
  \begin{equation*}
      \begin{aligned}
      & (\nabla (p_h^{n+1} - p_h^n), \nabla \psi) =-\frac{1}{\Delta t} (\nabla \cdot \tilde{\uvec}_h^{n+1}, \psi), \\
      %& \nabla p_h^{n+1} \cdot \mathbf{n}|_{\partial \Omega}=0.
      \end{aligned}
  \end{equation*}
and update $\uvec_h^{n+1}$ by
  $$ \uvec_h^{n+1} = \tilde{\uvec}_h^{n+1} - \Delta t \nabla (p_h^{n+1}-p_h^n).$$
\end{itemize}

\begin{remark}
In Step 1, the pressure-correction scheme has been adopted to decouple velocity  $\uvec^{n+1}$ and pressure $p^{n+1}$ in Eq. \eqref{finalmodelnondim}. 
Although the pressure-correction scheme can lead to an invertible discrete linear system at each time step regardless of whether the inf-sup condition is satisfied, the inf-sup condition is crucial for the stability and convergence of the numerical scheme \cite{Guermond2006}. 
Hence, in the current study, the inf-sup
stable Iso-P2/P1 element is adopted.
\end{remark}

Next we discuss how to solve microscopic part (\ref{Eq_qi}) with a given $\uvec_h^{n+1}$ at each node $\x_{\alpha}$. One difficulty is that $\qvec_i$ is a function of $\x$ and $t$ due to the convection term $\uvec \cdot \nabla \qvec_i(\x, t)$. Many earlier numerical studies based on CONNFFESSIT algorithms either focus on the shear flows in which the convection term vanishes or ignores the convection term \cite{Jourdainanalysis1, ottinger1996}. To deal with the convection term in stochastic methods, two types of methods have been developed. One is to introduce a spatial-temporal discretization to $\qvec_i$, as used in Brownian Configuration Field method \cite{ du2005fene, ottinger1997brownian, XuSPH2014}. Another way is to use a Lagrangian viewpoint to compute the convection term \cite{Halin1998}.
In the current study, we use the idea of the second approach, and use 
an operator splitting approach to solve Eq. \eqref{Eq_qi}. Initially, we assign ensemble of particles $\{\qvec_{{\alpha},  i} \}_{i=1}^N$ to each node $\x_{\alpha}$ ($\alpha = 1, 2, \ldots N_x$). We assume that $f(\x, \qvec, 0)$ is spatially homogeneous, 
and use the same ensemble of initial samples at all $\x_{\alpha}$. Within the values $\uvec_h^{n+1}$, we solve the microscopic equation (\ref{Eq_qi}) by the following two steps:

\begin{itemize}
 
 \item Step 2.1: At each node $\x_{\alpha}$, solve Eq. \eqref{Eq_qinondim} without the convection term $\uvec^{n+1}_h \cdot \nabla \qvec_i$ by
    \begin{equation}\label{ODE_q_1}
    \begin{aligned}
        & \frac{1}{N}\frac{\qvec_i^{n+1, *}-\qvec_i^{n}}{\Delta t} =
    - \frac{1}{2{\rm Wi}} \frac{\delta \hat{\mathcal{F}}_h}{\delta \qvec_i}(\{\qvec_i^{n+1, *}\}_{i=1}^N), \\
        & \qvec^{n+1, **} = ({\bf I} + (\nabla \uvec_h^{n+1}) \Delta t)\qvec^{n+1, *},  
    \end{aligned}
    \end{equation}
where  $\hat{\mathcal{F}}_h [\{\qvec_i\}_{i=1}^N] =  \tfrac{1}{N} \sum_{i=1}^N \left[ \ln \left(\frac{1}{N} \sum_{j=1}^N K_h(\qvec_i - \qvec_j )\right)+\Psi(\qvec_i)\right] $ is the non-dimensionalized discrete microscopic free energy at each node. We omit the index $\alpha$ when it does not cause confusion.

 \item Step 2.2:  To deal with the convection term $\uvec \cdot \nabla \qvec_i$, we view each node $\x_{\alpha}$ as a Lagrangian particle, and update it according to the Eulerian velocity field $\uvec_h^{n+1}$ at each node
\begin{equation}
\tilde{\x}_{\alpha} = \x_{\alpha} + \Delta t  (\uvec_h^{n+1}|_{\x_{\alpha}}), \quad \alpha = 1, 2, \ldots N_x.
\end{equation}
Hence, $\{ \qvec_{\alpha, i}^{n+1, **} \}$ is an ensemble of samples at the new point $\tilde{\x}_{\alpha}$. To obtain $\qvec_{\alpha, i}^{n+1}$ at $\x_{\alpha}$, we use a linear interpolation 
 to get $\qvec_{\alpha, i}^{n+1}$ (at mesh with $\{ \x_{\alpha} \}$ being the set of nodes) from $\qvec_{\alpha, i}^{n+1, **}$ (at mesh with $\{ \tilde{\x}_{\alpha} \}$ being nodes) for each $i$.
\end{itemize}

 % Within the values $\uvec_h^{n+1}$, we solve the microscopic equation (\ref{Eq_qi}) by the update-and-project approach. 
An advantage of the above update-and-projection approach is that it doesn't require a spatial discretization on $\qvec_i(\x, t)$. 
Within the ensemble of particles $\{\qvec^{n+1}_{{\alpha},  i} \}_{i=1}^N$ on each node $\x_{\alpha}$, the updated values of the viscoelastic stress $\bm \tau_h^{n+1}$ at each node, denoted as $\{\bm \tau_{{\alpha}}^{n+1}\}_{\alpha = 1}^{N_x}$, can be obtained through the second equation of Eq. \eqref{finalmodelnondim}.  And then project them into the finite element space of $\bm \tau$, i.e. $V_{\bm \tau}$. To this end, 
we choose the projection operator $\mathcal{I}$, such that, for each component of the stress $\tau_{{\alpha}, l, k }^{n+1}$ with $l, k=1, \cdots d$, $\mathcal{I}(\{\tau_{{\alpha}, l, k}^{n+1}\}_{\alpha = 1}^{N_x}) := \sum_{\alpha=1}^{N_x} \tau_{{\alpha}, l, k}^{n+1} \phi_{{\alpha}}$, where $\{\phi_{{\alpha}}: \alpha =1, \cdots, N_x\}\subset S_h$ denotes the nodal basis for $S_h$.

\begin{remark}
The operator splitting approach has been widely used in many previous Fokker-Planck based numerical approaches for micro-macro models \cite{helzel2006multiscale}. One important reason is that the system admits a variational structure without the convention terms. Moreover, by separating the convection component, the particles at each physical location can be treated independently, which largely saves the computational cost.
\end{remark}

% For given velocity $\uvec =0$, 
%when   $\nabla \uvec =0$, 
Since the first step in (\ref{ODE_q_1}) admits a variational structure, the implicit Euler discretization can be reformulated as an optimization problem.
% We use an implicit Euler method  for the temporal discretization. ... And thus, Eq. \eqref{ODE_q_1} and Eq. \eqref{ODE_q_2} form a nonlinear system for $i=1, \cdots, N$.
In more detail, at each node $\x_{\alpha}$, we define
%has a diagonal form of $\nabla \uvec=r(\x)\mbox{diag}(1, -1)$, where $\mbox{diag}(1, -1)$ is the 2 $\times$ 2 matrix with diagonal entries 1, -1 and the off-diagonal entries 0, we define
$$
J_n[\{\qvec_i\}_{i=1}^N]=\frac{1}{N}\sum_{i=1}^N\left(\frac{1}{2\Delta t} |\qvec_i-\qvec_i^n|^2
%-\frac{r(\x)}{2}(q_{i1}^2-q_{i2}^2)
\right)+ \frac{1}{2{\rm Wi}} \hat{\mathcal{F}}_h[\{\qvec_i\}_{i=1}^N],
$$
where $\hat{\mathcal{F}}_h [\{\qvec_i\}_{i=1}^N] =  \tfrac{1}{N} \sum_{i=1}^N \left[ \ln \left(\frac{1}{N} \sum_{j=1}^N K_h(\qvec_i - \qvec_j )\right)+\Psi(\qvec_i)\right] $.
We can obtain a solution to the nonlinear system by solving an optimization problem \cite{wangEVI, rockafellar1976monotone}
\begin{equation}\label{optimiproblem}
    \{\qvec_i^{n+1,*}\}_{i=1}^N= \mathop{\arg\min}_{\{\qvec_i\}_{i=1}^N} J_n(\{\qvec_i\}_{i=1}^N)\ .
\end{equation}
An advantage of this reformulation is that we can prove the existence of the $\qvec_i^{n+1,*}$. More precisely, we have the following result. 

\begin{proposition}\label{convergence}
For any given $\{\qvec_i^n\}_{i=1}^N$ at $\x_{\alpha}$, there exists at least one minimal solution 
$\{\qvec_i^{n+1}\}_{i=1}^N$ of \eqref{optimiproblem} that also satisfies 
\begin{equation}\label{energystable}
    \frac{1}{2{\rm Wi}}\frac{\hat{\mathcal{F}}_h(
    \{\qvec_i^{n+1}\}_{i=1}^N)- \hat{\mathcal{F}}_h(\{\qvec_i^{n}\}_{i=1}^N)}{\Delta t}\leq -\frac{1}{2N \Delta t^2}\sum_{i=1}^N|\qvec_i^{n+1}-\qvec_i^{n}|^2.
\end{equation}
\end{proposition}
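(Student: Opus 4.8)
The plan is to split the statement into its two assertions---existence of a minimizer of \eqref{optimiproblem} and the energy-decay estimate \eqref{energystable}---and to observe that the second is an essentially free consequence of the first. Once $\{\qvec_i^{n+1}\}_{i=1}^N$ is known to be a minimizer, I would compare its value of $J_n$ against that of the admissible competitor $\{\qvec_i^n\}_{i=1}^N$, giving $J_n(\{\qvec_i^{n+1}\}) \le J_n(\{\qvec_i^n\})$. Since the quadratic penalty $\frac{1}{2N\Delta t}\sum_i |\qvec_i - \qvec_i^n|^2$ vanishes at the competitor $\{\qvec_i^n\}$, this inequality rearranges immediately into \eqref{energystable} after dividing by $\Delta t$. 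So the entire content of the proposition lies in the existence claim, which I would establish by the direct method of the calculus of variations on the finite-dimensional configuration space $(\mathbb{R}^d)^N$ (Hookean potential) or the open product of balls $\prod_{i=1}^N \{|\qvec_i| < Q_0\}$ (FENE).

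For existence I would first verify that $J_n$ is continuous and bounded below, then establish a coercivity/confinement property guaranteeing that a minimizing sequence stays in a compact set. The quadratic penalty and the potential contribution $\frac{\lambda_p}{N}\sum_i \Psi(\qvec_i)$ are continuous and nonnegative; the only delicate term is the regularized entropy $\frac{\lambda_p k_B T}{N}\sum_i \ln\left(\frac{1}{N}\sum_j K_h(\qvec_i - \qvec_j)\right)$. Here the key structural observation is that the Gaussian kernel obeys $0 < K_h(\qvec_i - \qvec_j) \le K_h(0) = (\sqrt{2\pi}h_p)^{-d}$, and that the self-interaction term $j=i$ always contributes $K_h(0) > 0$. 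Consequently the argument of each logarithm is sandwiched, $\frac{K_h(0)}{N} \le \frac{1}{N}\sum_j K_h(\qvec_i - \qvec_j) \le K_h(0)$, so the entropy term is globally bounded (above and below) and continuous in $\{\qvec_i\}$, which in turn makes $J_n$ bounded below.

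The coercivity step then differs slightly between the two models. For the Hookean potential $J_n$ is defined on all of $(\mathbb{R}^d)^N$, and the quadratic penalty forces $J_n(\{\qvec_i\}) \to +\infty$ as $\sum_i |\qvec_i| \to \infty$ (the entropy being bounded and $\Psi \ge 0$), so the sublevel set $\{J_n \le J_n(\{\qvec_i^n\})\}$ is closed and bounded, hence compact. For FENE the admissible set is open and bounded, and the role played by quadratic coercivity at infinity is taken over by the blow-up $\Psi(\qvec_i) \to +\infty$ as $|\qvec_i| \to Q_0$: since along a minimizing sequence $J_n$ is bounded above by the finite value $J_n(\{\qvec_i^n\})$ (the previous iterate lying strictly inside the balls), the sequence is confined to $\{|\qvec_i| \le Q_0 - \delta\}$ for some $\delta > 0$, again compact. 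In either case I would extract a convergent subsequence and invoke continuity of $J_n$ to conclude that its limit is a minimizer, which I then feed back into the comparison argument of the first paragraph to obtain \eqref{energystable}.

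The main obstacle is precisely the handling of the logarithmic entropy term: without the kernel regularization $\ln f_N$ is ill-defined on a sum of Dirac masses, and it is the self-interaction $K_h(0)$ that simultaneously keeps the logarithm's argument bounded away from zero and from above. I would also stress that no convexity of $J_n$ is available---the logarithm of a sum of Gaussians is not convex in the particle positions---so one cannot expect uniqueness nor appeal to a strict-convexity argument; this is exactly why the statement claims only the existence of \emph{at least one} minimizer and why the direct method is the natural route.
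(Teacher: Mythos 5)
Your proposal is correct and follows essentially the same route as the paper: existence via the direct method on the compact sublevel set $\{J_n \le J_n(\{\qvec_i^n\})\}$, with the entropy term bounded below through the Gaussian self-interaction $K_h(0)$ and $\Psi \ge 0$ giving coercivity from the quadratic penalty, followed by the comparison $J_n(\{\qvec_i^{n+1}\}) \le J_n(\{\qvec_i^n\}) = \hat{\mathcal{F}}_h(\{\qvec_i^n\})$ to obtain \eqref{energystable}. The only difference is that you explicitly treat the FENE case, where confinement of the minimizing sequence away from $|\qvec_i| = Q_0$ comes from the blow-up of $\Psi$ rather than from the penalty term---a point the paper's proof glosses over by treating $\Psi \ge 0$ generically.
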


\begin{proof}
Let $\bm X \in \mathbb{R}^D$ ($D=N\times d$) be vectorized $\{\qvec_i\}_{i=1}^N$, namely, $$\bm X=(q_{1,1}, \ldots,  q_{N,1}, q_{1,2}, \ldots, q_{N,2},\ldots, q_{N,d}).$$
Denote $\hat{\mathcal{F}}_h(\{\qvec_i\}_{i=1}^N)$ and $J_n(\{\qvec_i\}_{i=1}^N)$ as $\hat{\mathcal{F}}_h(\bm X)$ and $J_n(\bm X)$ respectively. For given $\bm X^n=\{\qvec_i^n\}_{i=1}^N$, we define
$$
S=\{J_n(\bm X)\leq J_n(\bm X^n)\}
$$
be the admissible set.
Obviously, $S$ is non-empty and closed, since $\bm X^n \in S$ and $J_n(\bm X)$ is continuous. Moreover, it's easy to prove that $\hat{\mathcal{F}}_h(\bm X)$ is bounded from below, since
$$
\ln\left(\frac{1}{N}\sum_{j=1}^N K_h(\qvec_i^{n}, \qvec_j^{n})\right)\geq \ln\left(\frac{1}{N} \frac{1}{(\sqrt{2\pi}h)^d} \right) \quad \mbox{and} \quad \Psi(\qvec_i)\geq 0.
$$
And thus, $J_n(\bm X)$ is  coercive
%namely, when $|\X|\rightarrow \infty$, $J_n(\bm X)\rightarrow \infty$. Thus, in this case, 
and $S$ is a bounded set. Hence, $J_n(\bm X)$ admits a global minimizer $\bm X^{n+1}$ in $S$. 
%Since $\bm X^{n+1}$ is a global minimizer of $J_n(\bm X)$, we have 
Therefore, we have
\begin{equation}
    \frac{1}{N}\sum_{i=1}^N\left(\frac{1}{2\Delta t} |\qvec_i^{n+1}-\qvec_i^n|^2
%-\frac{r(\x)}{2}(q_{i1}^2-q_{i2}^2)
\right)+\frac{1}{2{\rm Wi}}\hat{\mathcal{F}}_h(\bm X^{n+1}) \leq \frac{1}{2{\rm Wi}} \hat{\mathcal{F}}_h(\bm X^{n}),
\end{equation}
which is equivalent to Eq.  \eqref{energystable}.
\end{proof}

% {\color{red} using a suitable nonlinear optimization method. %such as L-BFGS and  Barzilai-Borwein method \cite{barzilai1988two}. 
In the numerical implementation, we adopt the Barzilai-Borwein gradient method \cite{barzilai1988two} to solve the optimization problem (\ref{optimiproblem}). The Barzilai-Borwein method is a gradient descent algorithm with an adaptive stepsize, which updates ${\bm X}^k$ (vectorized $\{\qvec_i\}_{i=1}^N$) through 
\begin{equation}
{\bm X}^k  = {\bm X}^{k-1} - \alpha_{k-1} \nabla_{\bm X} J_n( {\bm X}^{k-1}),
\end{equation}
where $\alpha_{k-1} = {\bm s}_k^{\rm T} {\bm s}_k / ({\bm s}_k^{\rm T} {\bm y}_k)$ is a BB stepsize. Here, ${\bm y}_k = \nabla_{\bm X} J( {\bm X}^{k-1}) - \nabla J_{\bm X} ( {\bm X}^{k-2})$ and ${\bm s}_k = {\bm X}^{k-1} - {\bm X}^{k-2}$. We compute ${\bm X}^1$ by using gradient descent with stepsize $10^{-7}$. 
The initial guess, ${\bm X}^0$, is taken as vectorized $\{\qvec_i^0\}_{i=1}^N$. The stopping criteria are set to $\| \nabla_{\bm X} J ({\bm X}) \|_2 \leq 10^{-9}$ or reaching the maximum number of inner iterations, which is set to 50.
%method with initial guess to be $\{\qvec_i^n\}_{i=1}^N$ from the $n$-th step, each inner step size 1e-7 and the stopping criteria 1e-9. }
%and the maximum number of inner iteration 5000.

\begin{remark}
In the current numerical scheme, we estimate the macroscopic stress tensor by taking microscopic distribution function as the empirical measure for the finite number of particles $\{ \qvec_i\}_{i=1}^N$. More advanced techniques can be applied to this stage to obtain a more accurate estimation to the stress tensor, such as the maximum-entropy based algorithm developed in Ref. \cite{arroyo2006local} and Ref. \cite{rosolen2013adaptive} that reconstructs basis functions from particles. We'll explore this perspective in future works.
% For instance, following we can estimate
% step 1, $\tau^n$ can be estimated by more advanced technique. 
\end{remark}

\section{Results and discussion} 

In this section, we perform various numerical experiments to validate the proposed numerical scheme by studying various well-known benchmark problems for the micro-macro models \cite{Laso1993, ottinger1996, Hyon2014}.

\begin{figure}[!htpb]
  \centering
  \includegraphics*[width = 0.9 \linewidth]{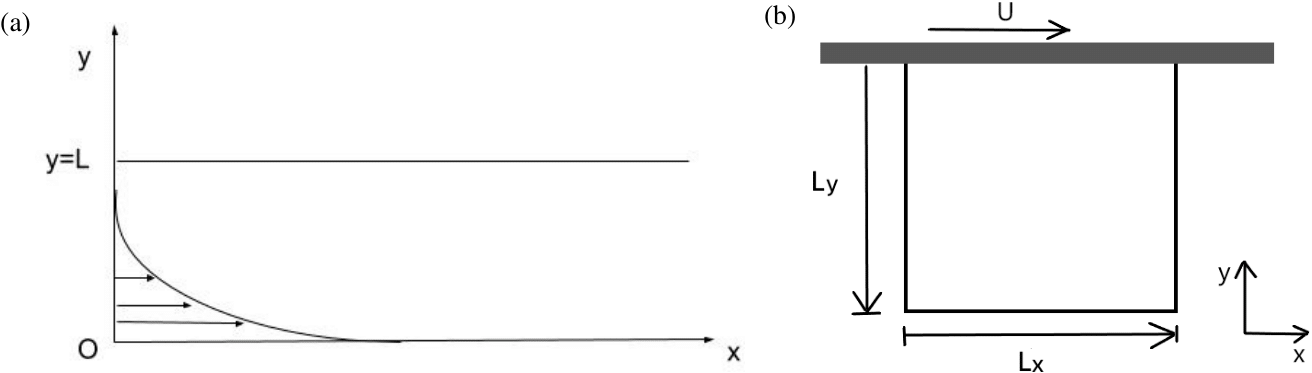}
  \caption{Schematic representation of (a) the initial simple shear flow and (b) the lid-driven cavity flow.}
  \label{two_type_flows}
  \end{figure}

We consider two flow scenarios: a simple shear flow and a lid-driven cavity flow. In a simple shear flow, a viscous fluid is enclosed between two parallel planes of infinite length, separated by a distance $L$, see Figure \ref{two_type_flows}(a) for an illustration. At $t=0$, the lower plane moves in the positive $\x$ direction with a constant velocity $U$. 
%%%%%%%%%%%%%%%%%%
Due to the special geometry of simple shear flows, it is common to use the ansatz  $\uvec(\x,t)=(u(y,t),0)$, which means the velocity field $\uvec(\x,t)$ is in the $\x$-direction and depends only on the $y$-variable.
Additionally, we use the ansatz that $\qvec_i$ depends only on $y$-variable, which implies that $\uvec \cdot \nabla \qvec_i = 0$ \cite{Keunings1997, Laso1993}. Hence, the micro-macro model can be simplified into:
\begin{equation}\label{shearmodel}
\left\{
\begin{aligned}
& {\rm Re} \frac{\partial u}{\partial t}(y,t)= \tilde{\eta}_s \frac{\partial^2 u}{\partial y^2}(y,t)+\frac{\partial \tau_{21}}{\partial y}(y,t),\\
&  \tau_{21}=\frac{\epsilon_p}{{\rm Wi}} \frac{1}{N} \sum_{i=1}^N 
\bigg( \frac{\sum_{j=1}^N  [\nabla_{\qvec_i} K_h(\qvec_i, \qvec_j)]_1}{\sum_{j = 1}^N K_h(\qvec_i, \qvec_j) } + \sum_{k=1}^N \frac{[\nabla_{\qvec_i} K_h(\qvec_k, \qvec_i)]_1}{\sum_{j=1}^N K_h(\qvec_k, \qvec_j)} + [\nabla_{\qvec_i} \Psi(\qvec_i)]_1 \bigg) q_{i2}(y,t), \\
&\frac{\partial q_{i1}}{\partial t}(y,t)-\frac{\partial u}{\partial y}q_{i2}(y,t)=- \frac{1}{2{\rm Wi}} \left(\bigg[ \frac{\sum_{j=1}^N  [\nabla_{\qvec_i} K_h(\qvec_i, \qvec_j)]_1}{\sum_{j = 1}^N K_h(\qvec_i, \qvec_j) } + \sum_{k=1}^N \frac{[\nabla_{\qvec_i} K_h(\qvec_k, \qvec_i)]_1}{\sum_{j=1}^N K_h(\qvec_k, \qvec_j)}\bigg] + [\nabla_{\qvec_i} \Psi(\qvec_i)]_1 \right),\\
& \frac{\partial q_{i2}}{\partial t}(y,t)=-\frac{1}{2 {\rm Wi}} \left( \bigg[ \frac{\sum_{j=1}^N  [\nabla_{\qvec_i} K_h(\qvec_i, \qvec_j)]_2}{\sum_{j = 1}^N K_h(\qvec_i, \qvec_j) } + \sum_{k=1}^N \frac{[\nabla_{\qvec_i} K_h(\qvec_k, \qvec_i)]_2}{\sum_{j=1}^N K_h(\qvec_k, \qvec_j)} \bigg]+ [\nabla_{\qvec_i} \Psi(\qvec_i)]_2 \right),
\end{aligned}
\right.
\end{equation}
where $\tau_{21}$ is the off-diagonal component of the extra-stress tensor $\bm \tau$, $\qvec_i=(q_{i1}, q_{i2})$ and
$[\bm{g}]_{l}$ denotes the $l$th component of the vector $\bm{g}$ ($l=1,2$).

In the lid-driven cavity flow, the polymeric fluid is bounded in a two-dimensional rectangular box of width $L_x$ and height $L_y$, and the fluid motion is induced by the translation of the upper wall at a velocity $U$. The width of the cavity is set to be $L_x=1$, and the horizontal velocity of the lid $u(x)=U=1$. The three other walls are stationary, and the boundary conditions applied to them are non-slip and impermeable (see Fig. \ref{two_type_flows}(b)). In this case, a full 2D Navier-Stokes equation needs to be solved.

For all the numerical experiments carried out in this section, we assume that the flow is two-dimensional and the dumbbells lie in the plane of the flow, namely, the configuration vector $\qvec$ is also two-dimensional. 
At each node, we use the same initial ensemble of $N$ particles, sampled from the 2-dimensional standard normal distribution.

\subsection{Hookean model: simple shear flow}

It is well-known that the micro-macro model \eqref{CPsystemwithf_1} with a Hookean potential $\Psi(\qvec) = \frac{1}{2} H |\qvec|^2$ is equivalent to a macroscopic viscoelastic model, the Oldroyd--B model. For the Oldroyd-B model, an analytical solution for the start-up of plane Couette flow in a 2D channel is available (see \cite{Laso1993, XuSPH2014}).
So we can test the accuracy of the proposed numerical scheme by comparing the simulation results of the micro-macro model with the analytical solutions of the corresponding Oldroyd-B model.

\begin{figure*}[!hbtp]
  \centering
   \begin{overpic}[width=0.45\linewidth]{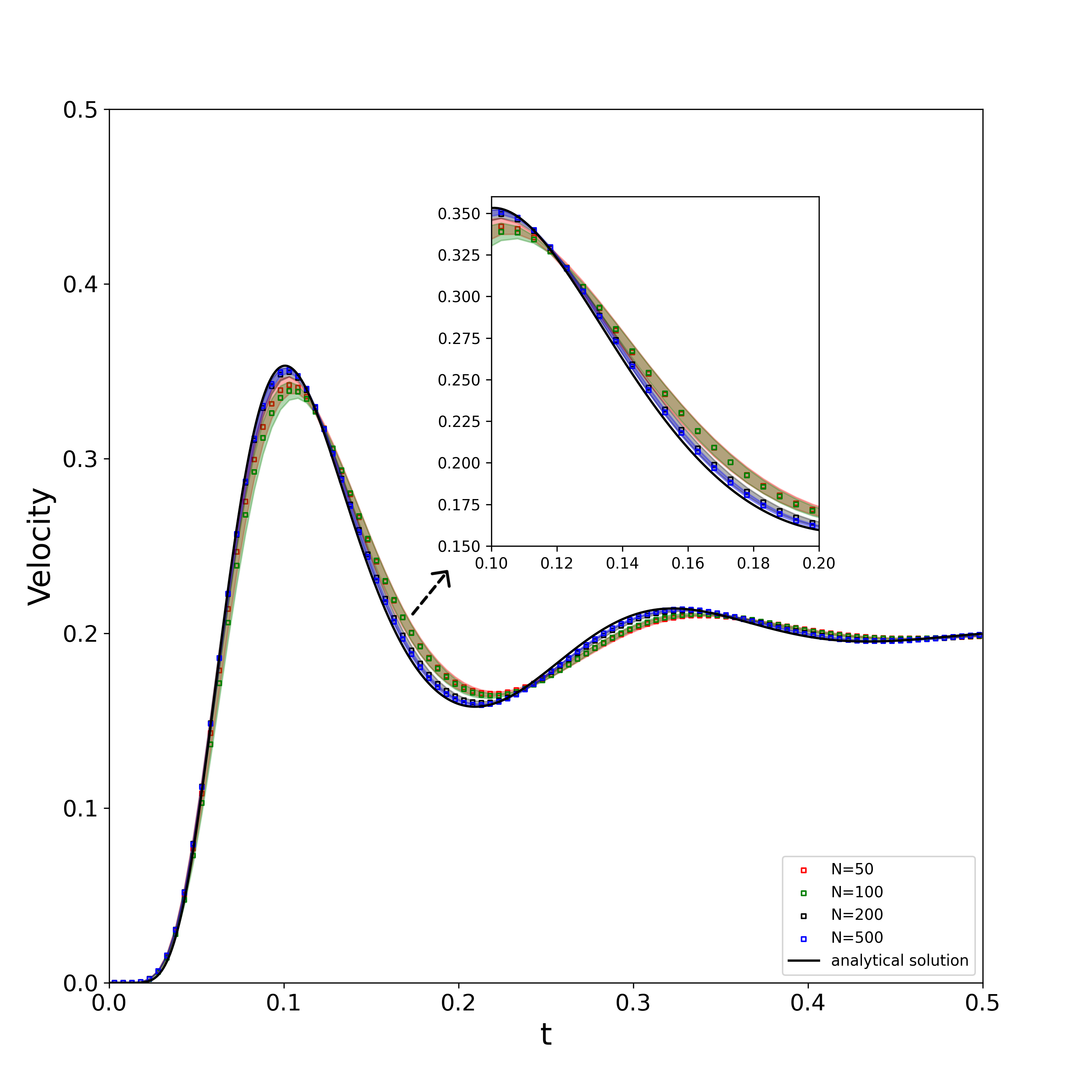}
  \put(-2, 85){(a)}
   \end{overpic}
   \begin{overpic}[width=0.45\linewidth]{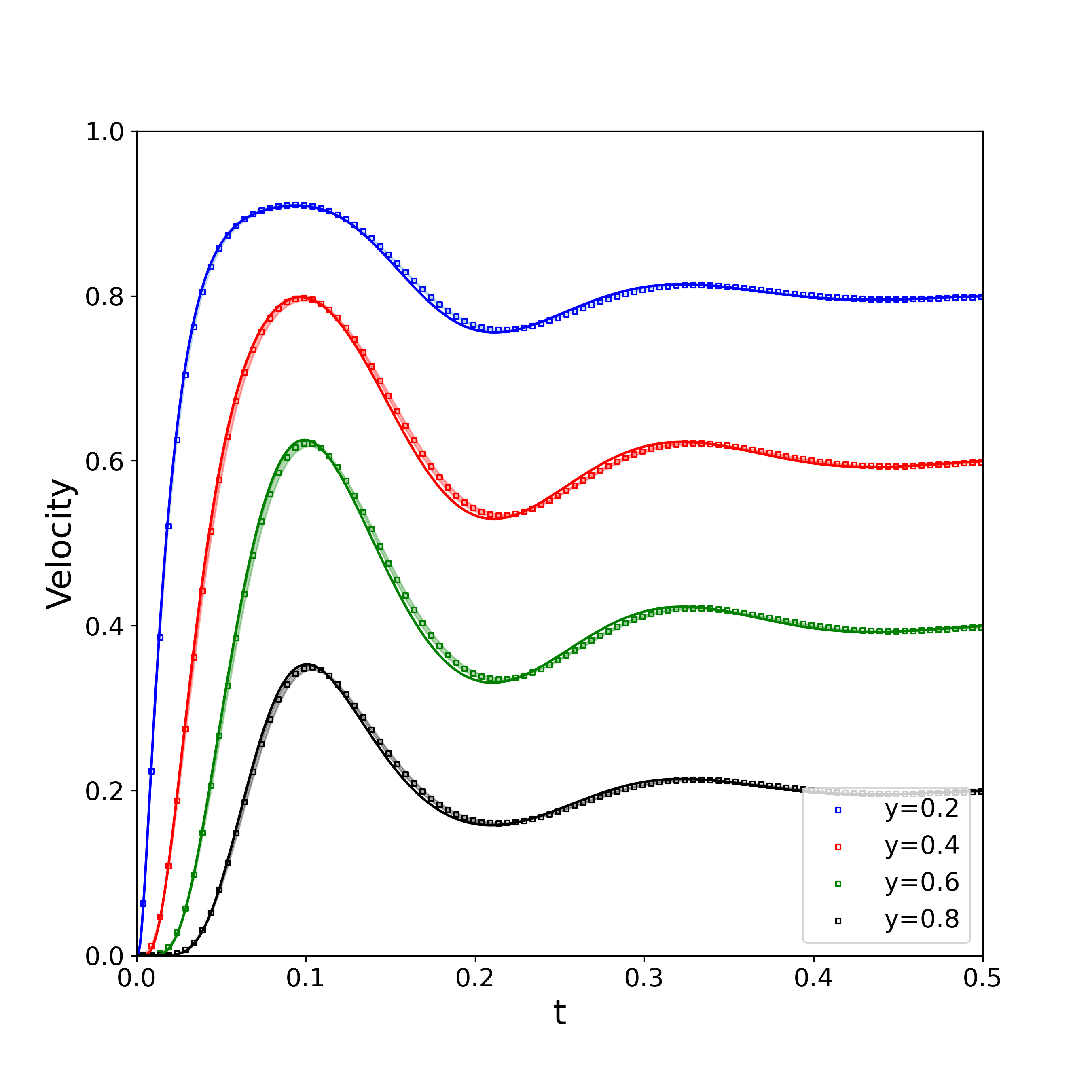}
    \put(-2, 85){(b)}
   \end{overpic}
  \caption{(a) Time evolution of velocity at $y=0.8$ with different numbers of particles. The analytical solution of the corresponding Oldrody-B model is shown in a black line.  (b) Comparison of the velocity of the Hookean case with particle number $N=200$ (marker) and the analytical solution (solid line) at different times and different locations ($y = 0.2, 0.4, 0.6, 0.8$).}
  \label{testaccuracy_hook}
\end{figure*}

We choose the physical parameters as follows: ${\rm Re} = 0.11$, ${\rm Wi} = 0.1$, $\tilde{\eta}_s = 0.11$ and $\epsilon_p = 0.89$. The number of elements is $M=40$ and the time step is $\Delta t =10^{-3}$. Additionally, we choose the kernel bandwidth as  $h = \mbox{med}_n/\sqrt{2 \log N}$, motivated by the median trick proposed in \cite{liu2016stein}, to compute $\{ \qvec^{n+1} \}_{i=1}^N$. Here, $\mbox{med}_n$ denotes the median of the pairwise distances between the particles $\{\qvec_i^n \}_{i=1}^N$.
%{\color{red}Precisely, when computing the $(n+1)$-th step $\{ \qvec^{n+1} \}_{i=1}^N$, we utilize the median of the pairwise distance between the particles from the $n$-th step $\{ \qvec^{n} \}_{i=1}^N$ to determine $h$ explicitly.}

%The kernel bandwidth $h = \mbox{med}^2/\log N$ \cite{liu2016stein}, where $\mbox{med}$ is the median of the pairwise distance between the particles $\{\qvec_i^n\}_{i=1}^N$.

We begin by examining the impact of the number of particles on numerical results. Figure \ref{testaccuracy_hook}. (a) shows the time evolution of velocity at $y = 0.8$ for different numbers of particles ($N = 50, 100, 200$ and $500$). we perform $10$ independent runs, where we use the same parameters and different sets of initial particles.  Each marker on the plot represents the mean values, indicating mean values ± standard errors \cite{Keunings1997}. The results show that as the number of particles increases, the mean values converge to the analytical solution and the standard errors decrease. We notice that, for $N =200$, the maximum standard error \cite{Keunings1997} and relative error \cite{Hulsen1997} in the velocity concerning time throughout all locations turns out to be approximately 0.008 and 9\%, respectively. 
Since a good numerical result can be achieved with $N = 200$ in this case, we set $N = 200$ in all the following numerical experiments for the efficiency of the numerical method.

Figure \ref{testaccuracy_hook} (b) presents the time evolution of the simulated velocity at $y = 0.2, 0.4, 0.6, 0.8$ for the micro-macro model with $N=200$ particles, compared to the analytical solution (solid line). 
It shows that our particle scheme with particle number $N=200$ yielded good numerical results.

\subsection{FENE model: simple extensional flows and hysteresis behavior}

The FENE models account for the finite extensibility of polymer chains, and can capture the hysteresis behavior of dilute polymer solutions in simple extensional flows during relaxation, which can be observed through the normal stress or elongational viscosity versus mean-square extension \cite{Lielens1998, Sizaire1999, Doyle1998}. However, many macroscopic closure models for the FENE potential fail to capture this behavior \cite{Sizaire1999, Hyon2014}. In this subsection, we demonstrate that the deterministic particle scheme is capable of capturing the hysteresis behavior of FENE models.

We consider the elongational velocity gradient given by \cite{Keunings1997}: 
\begin{equation}
\bm \kappa(t) %=k(t)=
= \varepsilon(t) \mbox{diag}(1, -1)\ ,
\end{equation}
where $\varepsilon(t)$ is the strain rate and $\mbox{diag}(1, -1)$ is the 2 $\times$ 2 diagonal matrix with diagonal entries being $1$ and $-1$.
Since the numerical simulations are carried out with a given velocity gradient $\nabla \uvec = \bm \kappa(t)$, the Fokker-Planck equation can be reduced
to
\begin{equation}\label{reduceFK}
    f_t+ \nabla_{\qvec}\cdot(\bm \kappa(t) \qvec f)= \frac{1}{2\rm Wi} \nabla_{\qvec}\cdot(f\nabla_{\qvec} \Psi )+  \frac{1}{2\rm Wi} \Delta_{\qvec} f, 
\end{equation}
where the distribution function $f = f(\qvec, t)$ is spatial homogeneous. The reduced Fokker-Planck equation \eqref{reduceFK} is employed for investigating the hysteresis behavior of the FENE model. The parameters in the model are taken as follows, ${\rm Wi} = 1; \   b = \sqrt{50}$.

Throughout this subsection, the initial data of particles is sampled from the 2-dimensional standard normal distribution. As stated in section 4.1, we set kernel bandwidth $h = \mbox{med}_n/\sqrt{2 \log N}$ for Hookean models, where $\mbox{med}_n$ is the median of the pairwise distance between the particles $\{\qvec_i^n\}_{i=1}^N$. 
However, the median trick is not suitable for the FENE potential, as the equilibrium distribution is no longer Gaussian type and the median of the pairwise distance can become very large. Numerical experiments show that taking kernel bandwidth $h = 0.01$ produces a good result for $N = 200$ for the FENE model in  simple extension flows. We also fix kernel bandwidth $h = 0.01$ and $N = 200$ for the following numerical experiments of the FENE models. We'll explore the effects of different kernel bandwidth $h$ in future work. The temporal step-size is taken as  $\Delta t = 10^{-3}$.

\begin{figure*}[!htbp]
  \centering
  \includegraphics[width = 0.9 \linewidth]{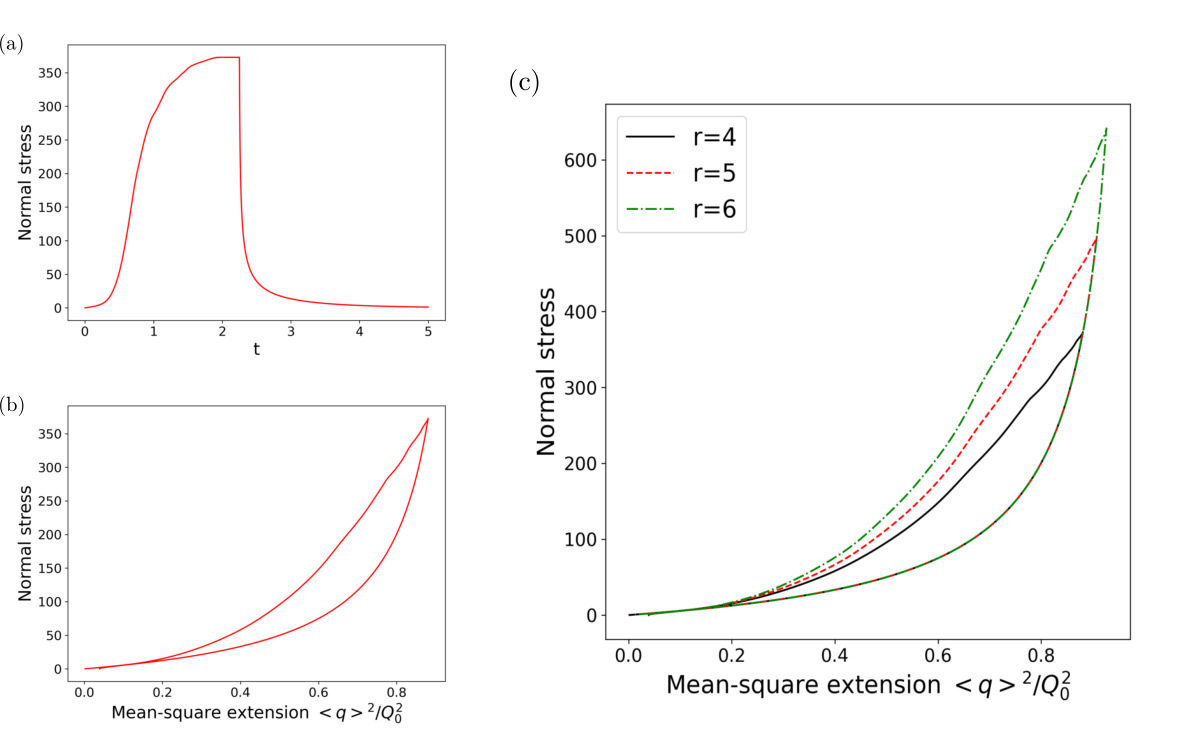}
  \caption{The start-up case with $r=4$: (a) the time evolution and (b) the hysteresis of normal stress $\tau_{11}-\tau_{22}$; (c) the hysteresis of the normal stress with $r=4$, $5$ and $6$.}\label{starupcase}
\end{figure*}

\begin{figure}[!htb]
\centering
\includegraphics*[width = 0.9 \linewidth]{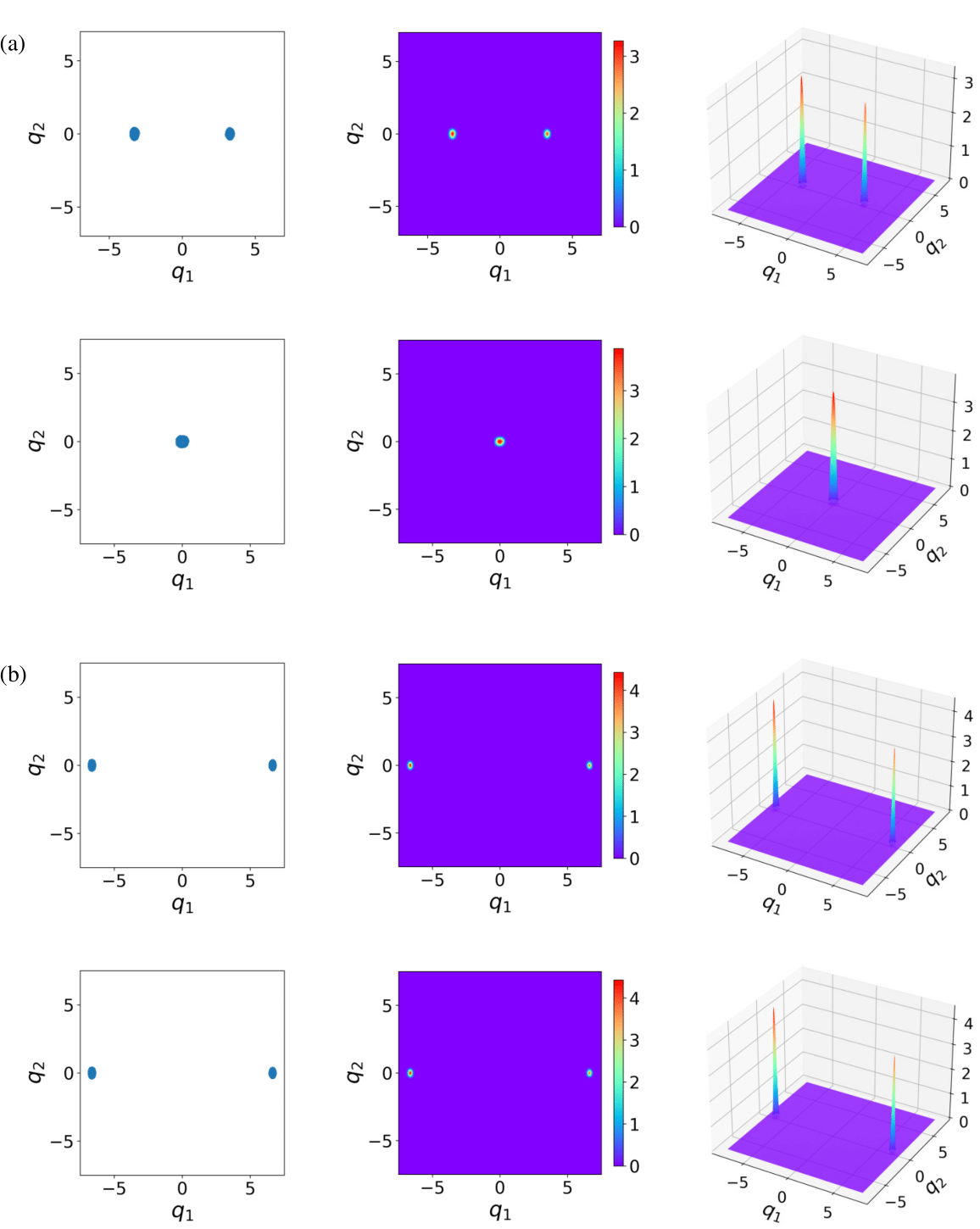}
\caption{The position of particles (left) and the underlying distribution of particles obtained by kernel density estimation  (middle and right) in the configuration space at different times with $r=4$. (a) Start-up case at $t=3$ (the first row) and $t=12$ (the second row). (b) The constant-gradient velocity case at $t=3$ (the first row) and $t=12$ (the second row).}\label{particlestarupcase}
 \end{figure}

We first consider the start-up case, in which $\varepsilon(t)$ %is a step function 
given by
\begin{equation}\label{startepsilon}
\varepsilon(t)=
\left\{
\begin{aligned}
&r \qquad 0\leq t \leq \frac{9}r \ , \\
&0 \qquad \mbox{otherwise}\ .
\end{aligned}
\right.
\end{equation}
The time evolution of normal stress $\tau_{11}-\tau_{22}$ and the plot of the normal stress versus the mean-square extension $ \langle \qvec^2 \rangle /Q_0^2$ for $r = 4$ are plotted in Figure \ref{starupcase} (a)-(b). 
The comparison of hysteresis  behavior of the FENE model for different extensional flow rates ($r=4, 5, 6$) is shown in Figure \ref{starupcase} (c). It is observed that when the strength of velocity gradient is getting smaller, the hysteresis behavior becomes narrower. 
The numerical results are consistent with those obtained in the former work \cite{Hyon2014}.

As discussed in \cite{Hyon2014}, to catch the hysteresis of the original FENE model, a coarse-grained model should be able to catch the spike-like behaviors of the probability density in the FENE model in the large extensional effect of the flow field. The peak positions of the probability distribution function (PDF) distribution of the FENE model depend on the macroscopic flow field and change in time under the large macroscopic flow effects \cite{Hyon2008}. We show the position of particles (blue markers) and their underlying distribution probability density (obtained by the kernel density estimation) in the configuration space
at different times ($t = 3$ and $t = 12$) for the start-up case with $r=4$ in Figure \ref{particlestarupcase} (a).
The distribution splits into two spikes and then 
shows gradual centralized behavior. Eventually, it forms a single peak in the center, as shown in Figure \ref{particlestarupcase} (a). Notice that the numerical results in the equilibrium state are consistent with the equilibrium solution of the Fokker-Planck equation with zero flow rate, as the velocity rate turns to zero when $t$ is big enough ($t > 9/r$). 
%In the constant-gradient velocity case, 
We also consider a constant-gradient velocity case, in which $\varepsilon(t)= r$. In this case, the particles show two regions of higher concentration near the boundary of the configuration domain at the equilibrium state (i.e., with stable double spikes) \cite{Hyon2008}. Figure \ref{particlestarupcase} (b) shows the numerical results for $r = 4$ at $t = 3$ and $12$. The numerical results also indicate that the deterministic particle method can capture the $\delta$-function-like spike in the FENE model.

\subsection{FENE model: simple shear flow}

\begin{figure*}[!htb]
  \centering
  % Requires \usepackage{graphicx}
   \includegraphics[width = 0.9 \linewidth]{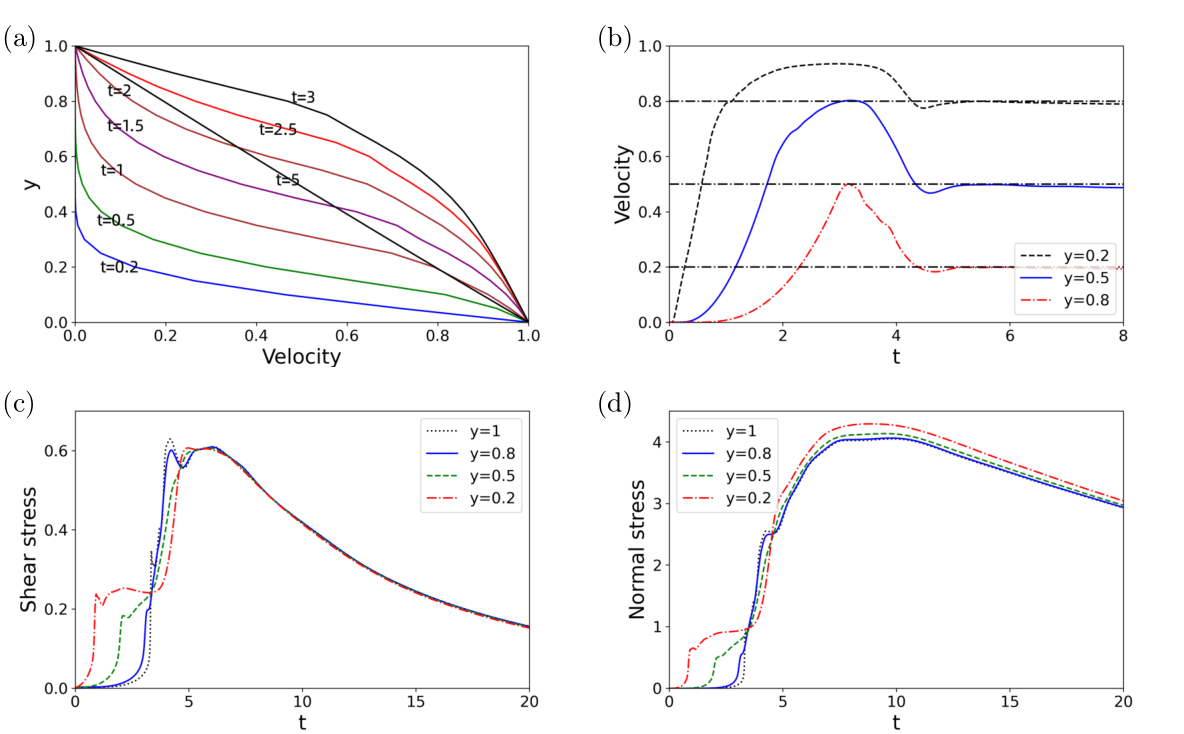}
  \caption{For the FENE model: the velocity $u$ with respect to location $y$ at different times (a); the time evolution of the velocity $u$ at different locations (b); the time evolution of the shear stress(c) and normal stress difference(d) at location $y=0.2$, $y=0.5$, $y=0.8$ and $y=1$.}\label{feneuyt}
\end{figure*}

In this subsection, we evaluate the proposed algorithm's performance for the micro-macro model with a FENE potential in a start-up plane Couette flow as shown in Figure 1(a). 
We set $L=1$, $M=20$, $\Delta t = 10^{-3}$.
%and the same boundary conditions as those in Section 2.} 
The non-dimensional parameters are chosen to be ${\rm Re} = 1.2757$, $\tilde{\eta}_s=0.0521$, ${\rm Wi} = 49.62$, $\epsilon_p=0.9479$, and $b=\sqrt{50}$, which are the same as those used in Ref. \cite{Laso1993}.

Figure \ref{feneuyt} (a) shows the velocity evolution with respect to the location $y$ at different times. It reveals the velocity overshoot phenomenon for the FENE model, which is a typical property of viscoelastic fluids. Figure \ref{feneuyt} (b) displays the velocity evolution with respect to time $t$ at three locations $y=0.2$, $y=0.5$ and $y=0.8$. It can be seen that the velocity overshoot occurs sooner in fluid layers nearer to the moving plane. Figures \ref{feneuyt}(c-d) show the temporal evolution of the shear stress and the normal stress difference at different locations $y=0.2$, $y=0.5$, $y=0.8$, and $y=1$. The stress response is sharper in fluid layers nearer to the moving plane, which is consistent with the behavior of velocity overshoot. We observe that the maximum of the normal stress occurs after the maximum of the shear stress. Specifically, the shear stress of the FENE model reaches its maximum at around $t=6$, but the maximum of the normal stress is reached at about $t=10$. The numerical results agree well with the former work \cite{Laso1993, XuSPH2014}, indicating the accuracy of our numerical scheme in the FENE case. Moreover, compared with the former work, our numerical results obtained by the deterministic particle scheme show fewer oscillations.
% which can be attributed to the reduction of stochastic noise due to the stochastic nature of the microscopic stress calculation.

\begin{figure*}[htpb]
   \centering
   \begin{overpic}[width=0.45\linewidth]{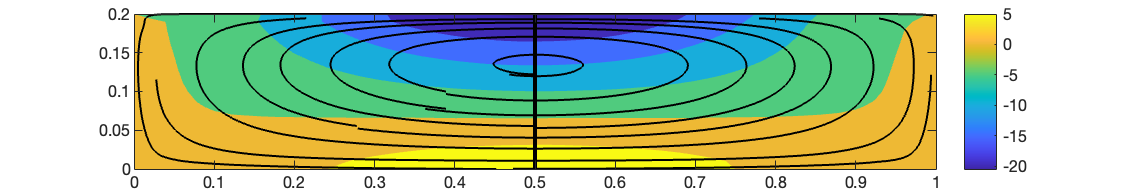}
  \put(0, 15){(a)}
   \end{overpic}
   \begin{overpic}[width=0.45\linewidth]{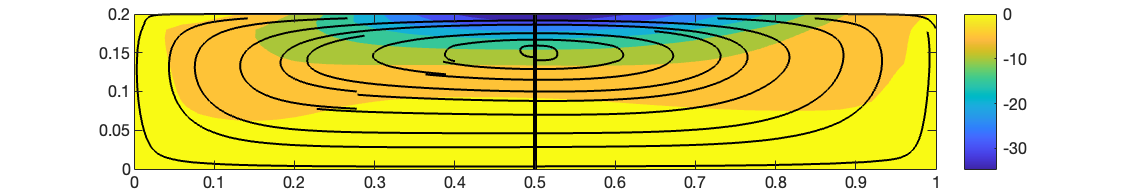}
    \put(0, 15){(b)}
   \end{overpic}
   
   \vspace{0.5cm}
   
   \begin{overpic}[width=0.45\linewidth]{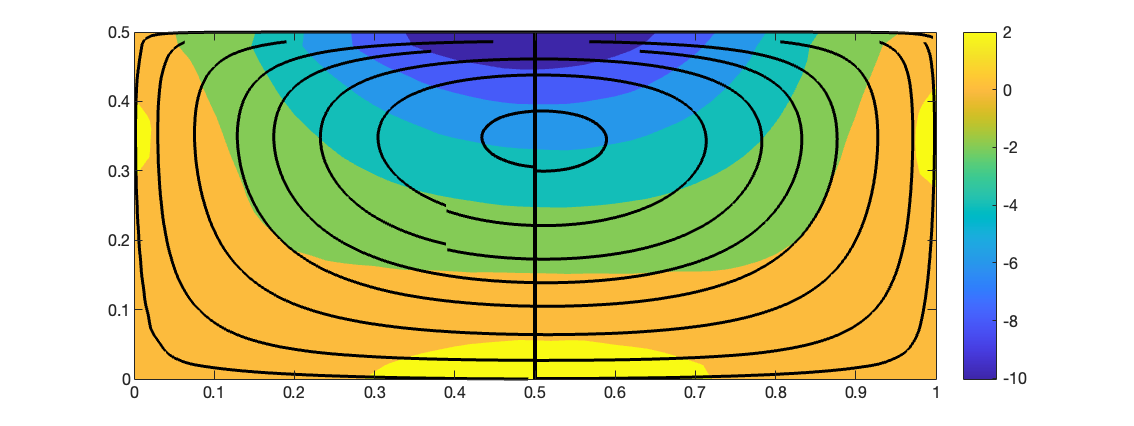}
      \put(0, 32){(c)}
  \end{overpic}
    \begin{overpic}[width=0.45\linewidth]{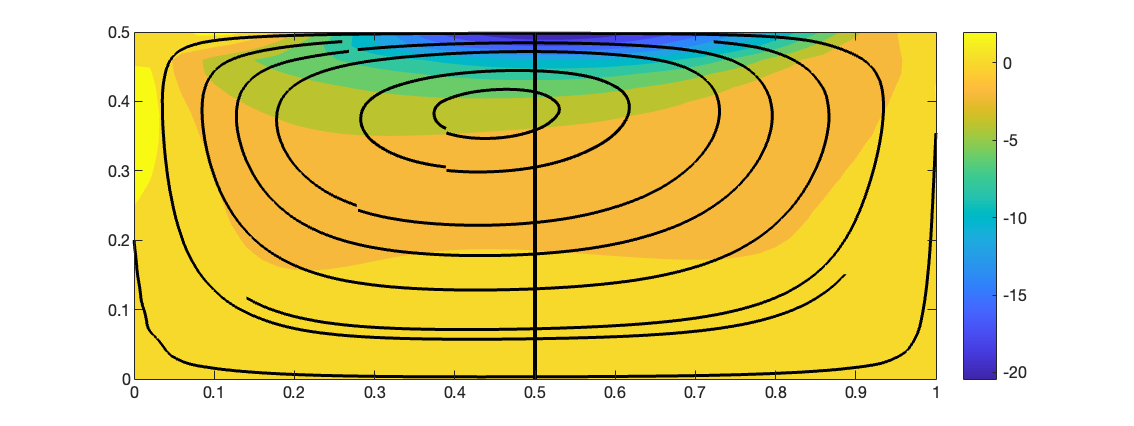}
    \put(0, 32){(d)}
   \end{overpic}
   
   \vspace{0.5cm}
   
   \begin{overpic}[width=0.45\linewidth]{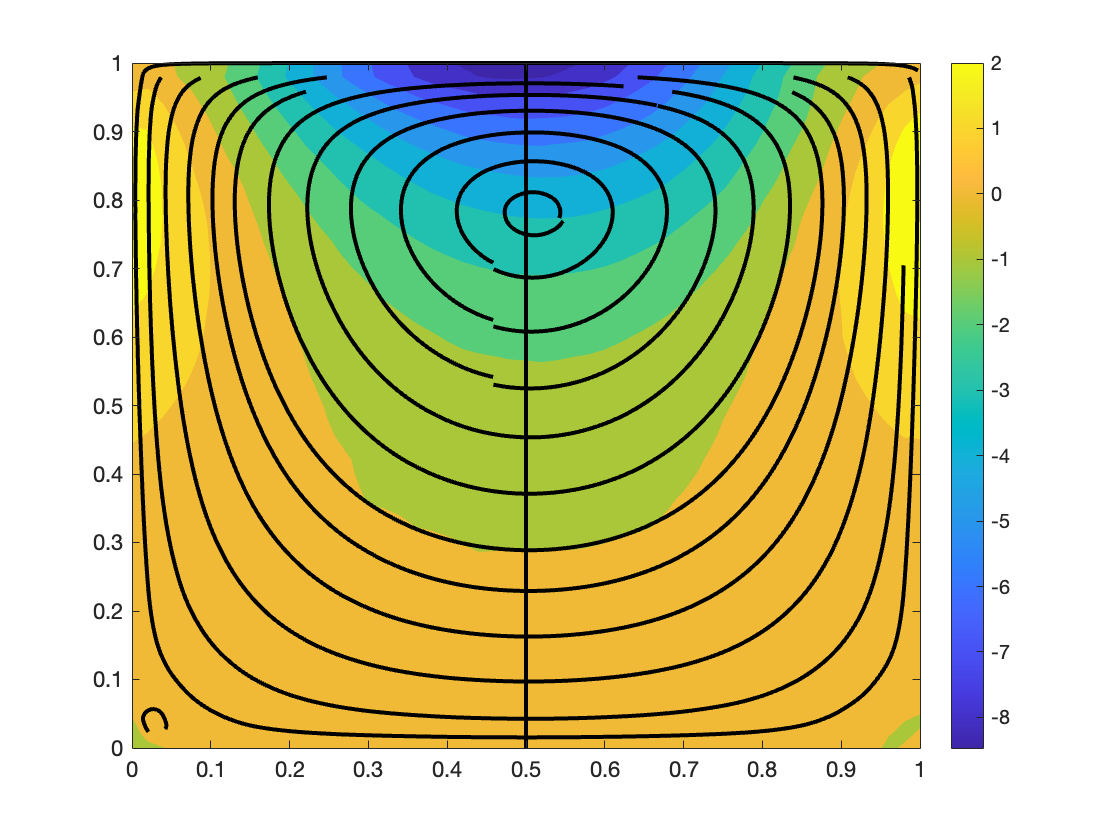}
      \put(0, 65){(e)}
  \end{overpic}
    \begin{overpic}[width=0.45\linewidth]{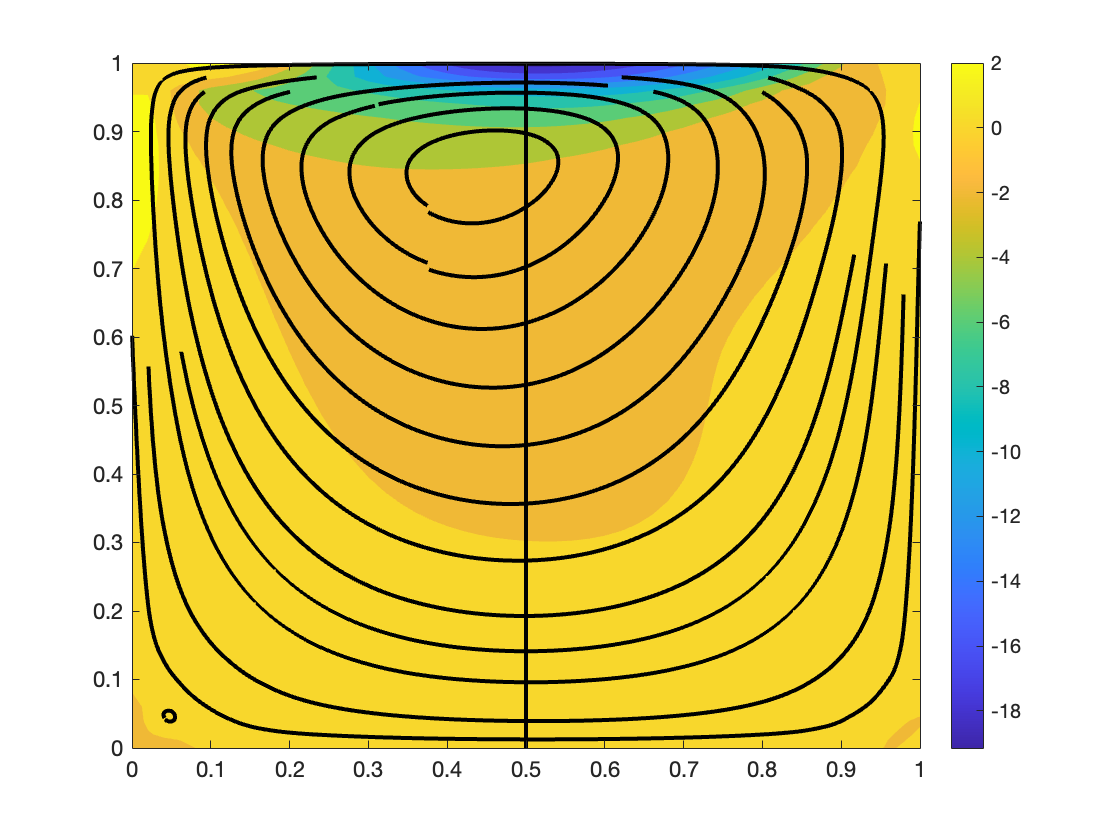}
    \put(0, 65){(f)}
   \end{overpic}
   
   \vspace{0.2cm}
   
   \begin{overpic}[width=0.3\linewidth]{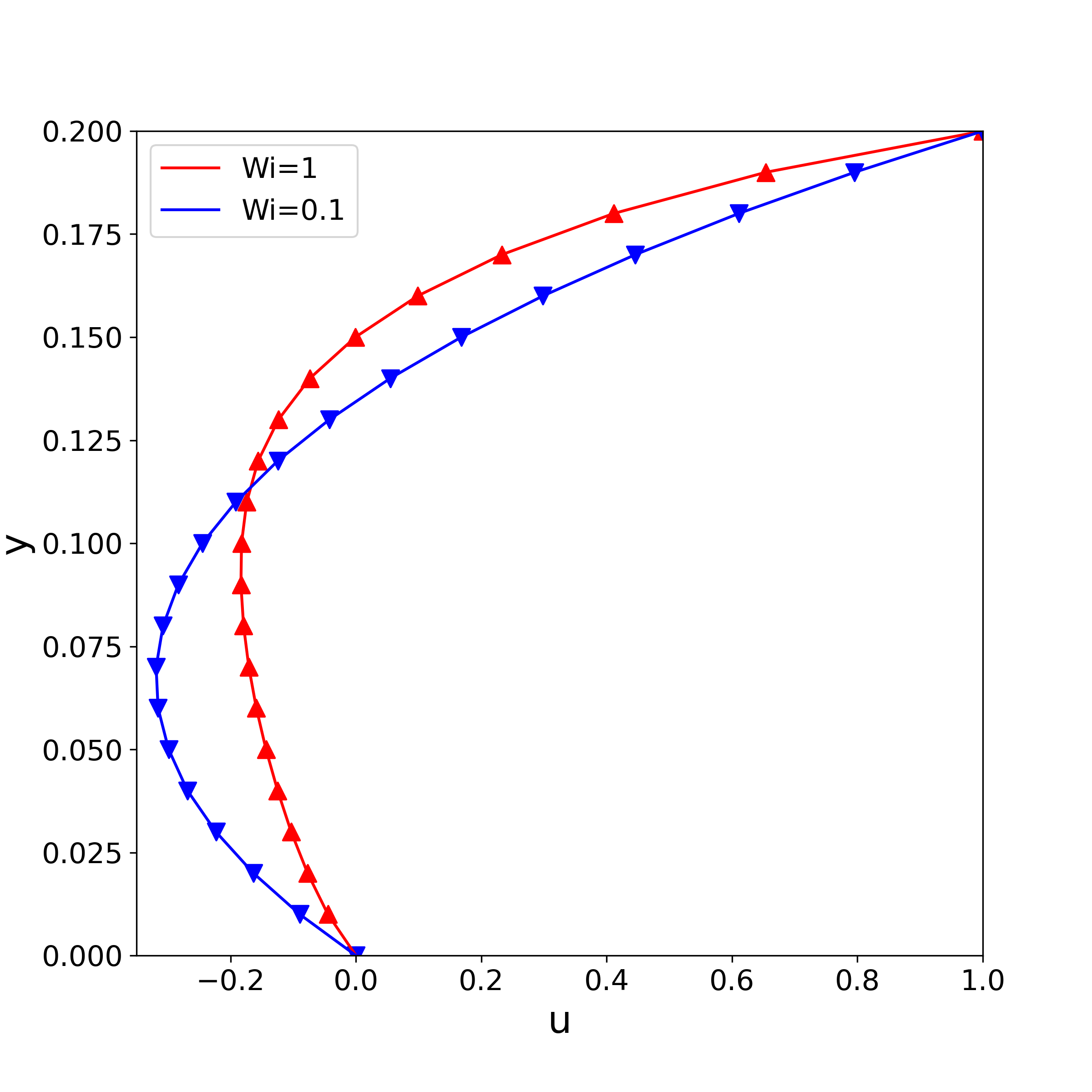}
      \put(-6, 85){(g)}
  \end{overpic}
    \begin{overpic}[width=0.3\linewidth]{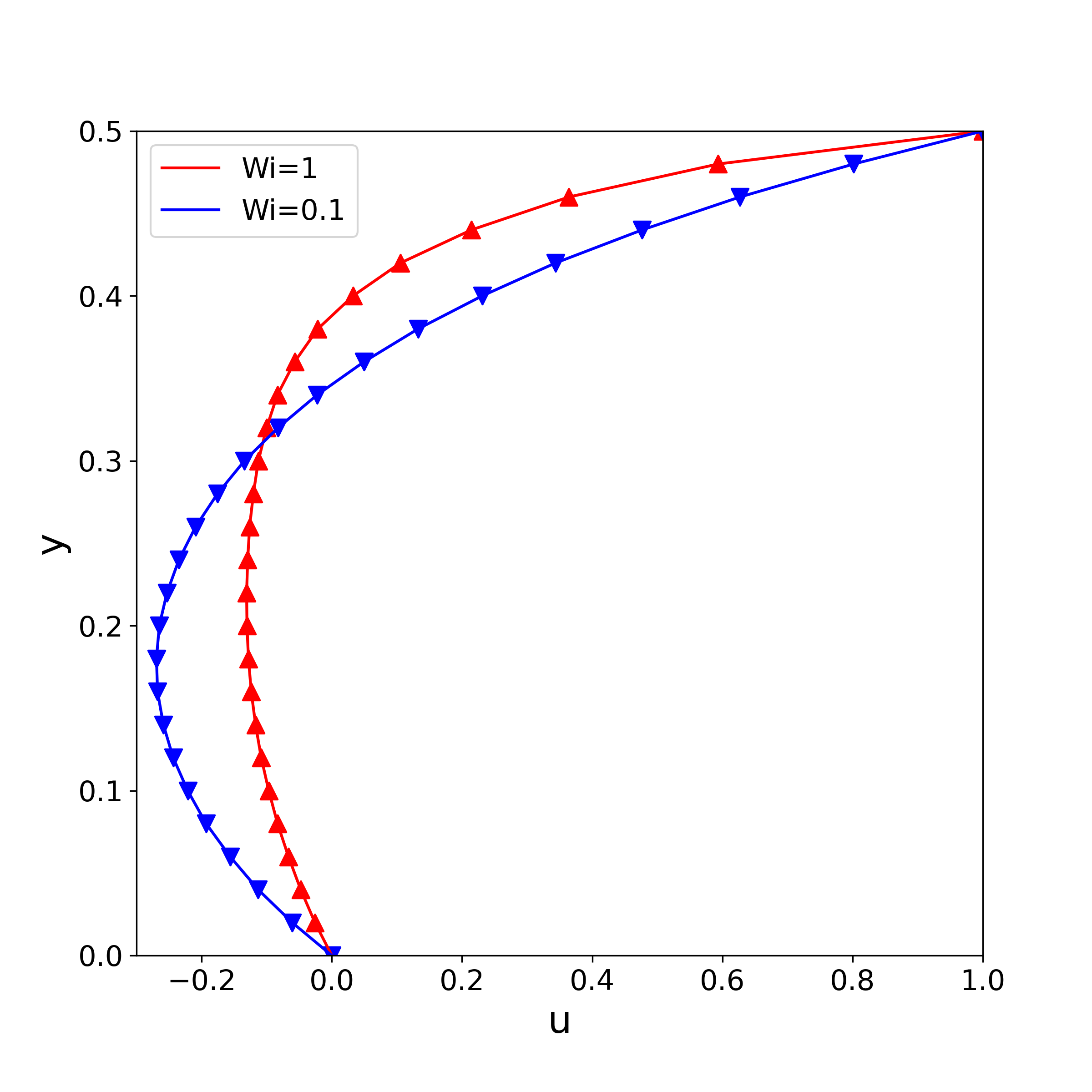}
    \put(-3, 85){(h)}
   \end{overpic}
       \begin{overpic}[width=0.3\linewidth]{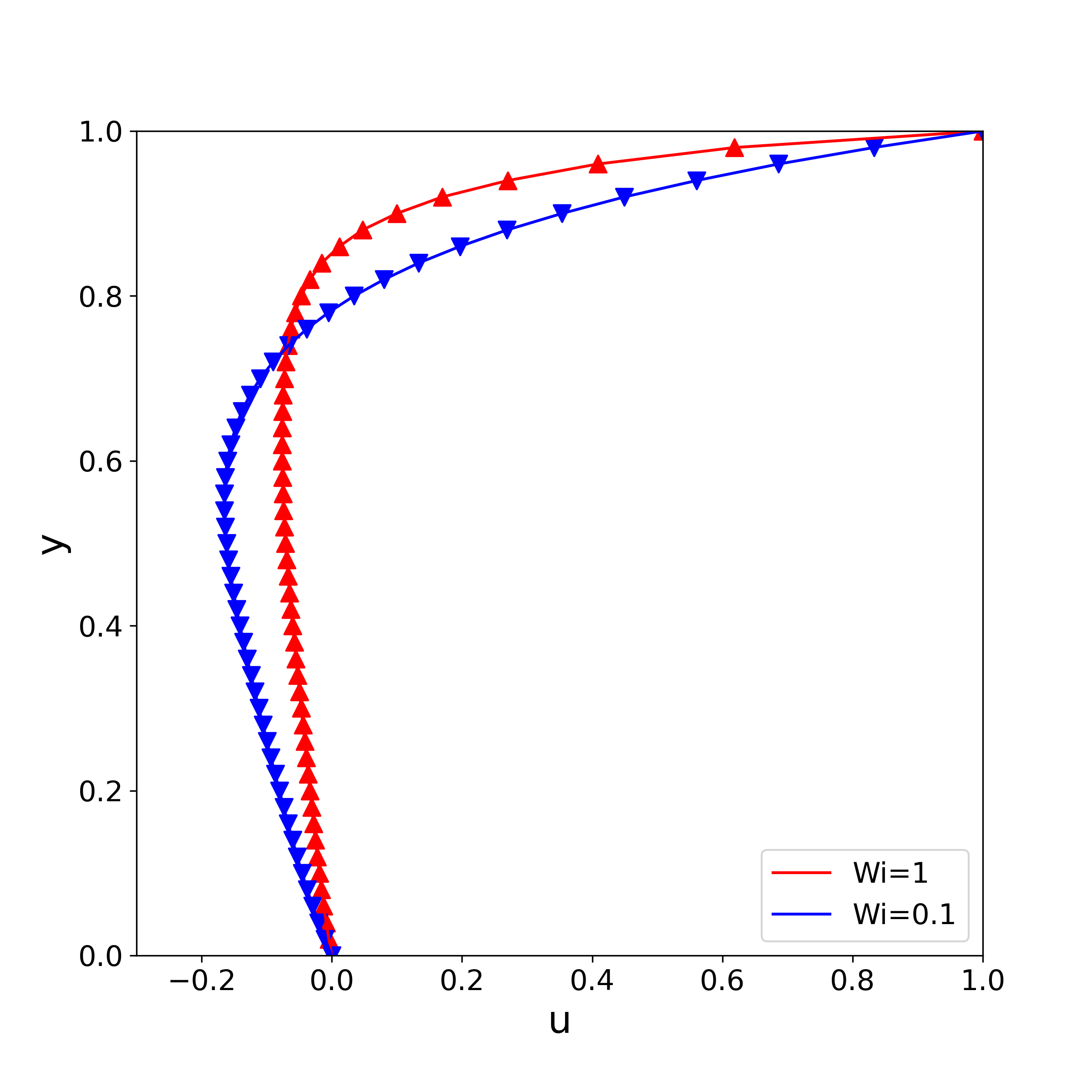}
    \put(-3, 85){(i)}
   \end{overpic}
  \caption{The streamlines and vortice contours of the lid-driven cavity flow in the FENE case at $T=1$ with $L_y=0.2$ ((a): ${\rm Wi} = 0.1$; (b): ${\rm Wi} = 1$), $L_y=0.5$ ((c): ${\rm Wi} = 0.1$; (d): ${\rm Wi} = 1$) and $L_y=1$ ((e): ${\rm Wi} = 0.1$; (f): ${\rm Wi} = 1$). The profile of $u$-velocity with respect to $y$ at position $x = 0.5$ for ${\rm Wi} = 0.1, 1$ with $L_y=0.2$ (g), $L_y=0.5$ (h) and $L_y=1$ (i).}\label{cavityflowu}
\end{figure*}

\subsection{FENE model: lid-driven cavity flow}

In this subsection, we simulate the FENE model for lid-driven cavity flows (see Figure \ref{two_type_flows}(b)). 
It is a 2D problem and a full 2D Navier--Stokes equation needs to be solved.
%%, which is a 2D problem.
Our experiments consider rectangular cavities with different heights: $L_y=0.2$, $0.5$, and $1$. 
To avoid numerical difficulties that arise from the geometric singularity at the edges of an idealized lid-driven cavity, we adopt a regularized horizontal lid velocity \cite{Sousa16} of the form:
$$
u(x) = 16U(x/L_x)^2(1-x/L_x)^2.
$$

To discretize the problem spatially, we choose $\mathcal{T}_h$ to be a uniform triangular mesh with the mesh size $N_x=50, N_y=20$ for $L_y=0.2$, $N_x=50, N_y=25$ for $L_y=0.5$, and $N_x=50, N_y=50$ for $L_y=1$.
%
%we use a uniform triangular mesh with $M = 2000$ for $L_y=0.2$, $M = 2500$ for $L_y=0.5$, and $M = 5000$ for $L_y=1$. 
We set the time step size $\Delta t = 10^{-3}$ for temporal discretization. Unlike the shear flow cases, the convection term $\uvec \cdot \nabla \qvec$ is non-zero, which is dealt with by a Lagrangian approach as introduced in section 3.
Other parameters in the numerical experiments are set as follows:
$
{\rm Re} = 1; \  \tilde{\eta}_s=0.11; \  
\epsilon_p =0.889; \  b = \sqrt{50}. 
$ 

Figures \ref{cavityflowu}(a)-(f) display the streamlines and vortice contours for different $L_y$ at time $T=1$ with ${\rm Wi}=0.1$ and ${\rm Wi} = 1$. 
Notice that the streamlines show symmetry structure when ${\rm Wi}=0.1$. And this symmetry structure holds for different $L_y$. 
However, as elasticity becomes more important, namely, the Weissenberg number (${\rm Wi}$) increases, the symmetries in the streamline structures break due to the presence of elastic effects \cite{rectangularcavity16}.  
Meanwhile, as the flow becomes asymmetric, the vortex center in the cavity shifts progressively upward and opposite to the direction of lid motion \cite{Anne1999}. This phenomenon is more evident in Figures \ref{cavityflowu}(g)-(i), which plot the $u$-velocity profiles at $x=0.5$ for the cavity flow with different $L_y$ and ${\rm Wi}$.
Additionally, the introduction of elasticity also weakens the strengths of vortices near the moving lid \cite{Sousa16}. 

The numerical results indicate that the particle-based scheme can capture these complex behaviors.
The qualitative agreement between our simulation results and those of the former work \cite{XuSPH2014, Anne1999} validates our numerical scheme for the 2D lid-driven cavity flow case.

\section{Conclusion}

In this article, we present a novel deterministic particle-finite element method (FEM) discretization for micro-macro models of dilute polymeric fluids. The proposed scheme employs a finite element method for the fluid flow equation and a variational particle scheme used for the kinetic viscoelastic model. %The coarse-grained model of particles is derived via a discrete energy variational approach, which preserves the variational structure at the particle level. 
The proposed scheme is validated through various benchmark problems, including steady flow, shear flow, and 2D lid-driven cavity flow.
Our numerical results are in excellent agreement with those from the former work \cite{Laso1993, XuSPH2014, Anne1999,  Hyon2008, Hyon2014} and demonstrate that the proposed scheme can capture certain complex behaviors of the nonlinear FENE model, including the hysteresis and $\delta$-function like spike behavior in extension flows \cite{Hyon2008, Hyon2014}, velocity overshoot phenomenon in pure shear flow \cite{Laso1993},  
symmetries breaking, vortex center shifting and vortices weakening in lid-driven cavity flow \cite{XuSPH2014}. Compared with the stochastic simulation methods in the former work \cite{Hulsen1997, Laso1993, XuSPH2014}, where a large ensemble of realizations of the stochastic process is needed, the deterministic particle scheme can achieve good numerical results with reduced oscillations using only a small number of particles.

The proposed method can also be applied to other complex fluid models, such as the Doi-Onsager model for liquid crystal polymers \cite{doi1988theory}, the multi-bead spring model \cite{zhou2004MBE}, and a two-species model for wormlike micellar solutions \cite{liu2021two, wang2021two}, which involves a reaction in the microscopic equation. Additionally, as a direction for future work, 
we aim to develop an energy-stable scheme for the overall system.

\section*{Acknowledgement}
%This work is partially supported by the National Science Foundation (USA) grants NSF DMS-1950868, NSF DMS-2153029 (C. Liu, Y. Wang). 
C. Liu is partially supported by NSF DMS-1950868, DMS-2153029, and DMS-2410742.
Y. Wang is partially supported by NSF DMS-2153029 and DMS-2410740. 
Part of this work was done when X. Bao visited the Department of Applied Mathematics at the Illinois Institute of Technology,  she would like to
acknowledge the hospitality of IIT. % and the financial support of the China Scholarship Council (No. 201906040019).
X. Bao also would like to thank Prof. Rui Chen for the helpful discussions.

\section*{Declaration of generative AI and AI-assisted technologies in the writing process}
During the preparation of this work, the author(s) used ChatGPT in order to improve Language only. After using this tool/service, the author(s) reviewed and edited the content as needed and take(s) full responsibility for the content of the publication.

\bibliographystyle{elsarticle-num} 
\bibliography{MM}

\end{document}